\documentclass[a4paper,12pt]{article}

\usepackage{color,amsmath,amssymb,bm,tikz}

\unitlength\textwidth
\divide\unitlength by 200\relax

\bmdefine{\sss}{s}
\bmdefine{\vvv}{v}

\DeclareMathAlphabet{\mathscr}{U}{rsfs}{m}{n}

\newcommand{\msCCC}{\mathscr{C}}
\newcommand{\msOOO}{\mathscr{O}}
\newcommand{\msPPP}{\mathscr{P}}

\newcommand{\msIII}{\mathscr{I}}

\newcommand{\msXXX}{\mathscr{X}}

\newcommand{\NNN}{\mathbb{N}}
\newcommand{\ZZZ}{\mathbb{Z}}
\newcommand{\QQQ}{\mathbb{Q}}
\newcommand{\RRR}{\mathbb{R}}
\newcommand{\KKK}{\mathbb{K}}

\newcommand{\aaaa}{\mathfrak{a}}
\newcommand{\mmmm}{\mathfrak{m}}
\newcommand{\nnnn}{\mathfrak{n}}
\newcommand{\pppp}{\mathfrak{p}}
\newcommand{\qqqq}{\mathfrak{q}}

\newcommand{\RRRRR}{{\mathcal R}}

\newcommand{\TTTTT}{{\mathcal T}}

\newcommand{\tUUUUU}{{{t}\mathcal U}}
\newcommand{\qUUUUU}{{{q}\mathcal U}}

\newcommand{\TTTTTn}{{\mathcal T}^{(n)}}

\newcommand{\tUUUUUn}{{{t}\mathcal U}^{(n)}}
\newcommand{\qUUUUUn}{{{q}\mathcal U}^{(n)}}

\newcommand{\covered}{\mathrel{<\!\!\!\cdot}}
\newcommand{\define}{\mathrel{:=}}

\newcommand{\Cl}{{\mathrm{Cl}}}
\newcommand{\cl}{{\mathrm{cl}}}
\newcommand{\gor}{Gorenstein}
\newcommand{\cm}{Cohen-Macaulay}
\newcommand{\bbm}{Buchsbaum}

\newcommand{\nzd}{non-zero-divizor}
\newcommand{\gr}{\mathrm{Gr}}

\newcommand{\rank}{\mathrm{rank}}

\newcommand{\cdeg}{{\mathrm{cdeg}}}

\newcommand{\ann}{{\mathrm{ann}}}
\newcommand{\ass}{{\mathrm{Ass}}}
\newcommand{\assh}{{\mathrm{Assh}}}
\newcommand{\depth}{{\mathrm{depth}}}
\newcommand{\trace}{{\mathrm{tr}}}

\newcommand{\Div}{{\mathrm{Div}}}
\renewcommand{\hom}{{\mathrm{Hom}}}
\newcommand{\ext}{{\mathrm{Ext}}}
\newcommand{\coker}{{\mathrm{Coker}}}
\newcommand{\spec}{{\mathrm{Spec}}}

\newcommand{\conv}{\mathrm{conv}}
\newcommand{\stab}{\mathrm{STAB}}


\newcommand{\xip}{\xi^{+}}




\newcommand{\ekp}{E_\KKK[\msPPP]}

\newcommand{\ekop}{{E_\KKK[\msOOO(P)]}}
\newcommand{\eksg}{{E_\KKK[\stab(G)]}}

\newtheorem{thm}{Theorem}[section]
\newtheorem{fact}[thm]{Fact}
\newtheorem{example}[thm]{Example}
\newtheorem{lemma}[thm]{Lemma}
\newtheorem{cor}[thm]{Corollary}
\newtheorem{definition}[thm]{Definition}
\newtheorem{prop}[thm]{Proposition}
\newtheorem{remark}[thm]{Remark}

\newtheorem{prob}[thm]{Problem}

\newtheorem{claim}{Claim}[thm]

\newcommand{\bigzerou}{\smash{\lower1.7ex\hbox{\bg 0}}}

\newcommand{\bigastu}{\smash{\lower1.7ex\hbox{\bg *}}}

\numberwithin{equation}{section}

\newcommand{\mylabel}[1]{{\label{#1}\tt [#1]}}
\let\mylabel=\label

\title{%
Radical property of the traces of the canonical modules of \cm\ rings}  
\author{Mitsuhiro Miyazaki%
\\
\normalsize
Osaka Central Advanced Mathematical Institute,\\
\normalsize
Osaka Metropolitan University,\\
\normalsize
3-3-138 Sugimoto, Sumiyoshi-ku Osaka 558-8585, Japan%
}
\date{
\tt e-mail:mmyzk7@gmail.com}

\begin{document}

\maketitle

\sloppy

\begin{abstract}
In this paper, we define a new concept of Noetherian commutative rings which stands between
\gor\ and \cm\ properties.
We show that this new property keep hold under common operations of commutative rings
such as localization, polynomial extension and under mild assumptions, flat extension, tensor product,
Segre product and so on.
We show that for Schubert cycles, the Ehrhart rings of cycle graphs and perfect graphs, this new
concept is close to \gor\ property.
\\
MSC: 13H10, 14M15, 05E40, 05C25, 52B20\\
 {\bf Keywords}: \gor, \cm, trace ideal, canonical module
\end{abstract}



\section{Introduction}

There is a hierarchy of commutative Noetherian local rings.
\begin{eqnarray*}
&&\mbox{regular}\Rightarrow\mbox{complete intersection}\\
&&\Rightarrow
\mbox{\gor}\Rightarrow\mbox{\cm}\Rightarrow\mbox{\bbm}.
\end{eqnarray*}
\cm\ rings are first defined as a Noetherian rings which satisfy unmixedness theorem.
Macaulay showed that unmixedness theorem holds for polynomial rings over a field and Cohen
showed that unmixedness theorem holds for regular local rings.
In old days, \cm\ rings are sometimes called semi-regular rings.
Further, there are many situations that rings under consideration is \cm.
Therefore \cm\ rings are platform of many theories.

On the other hand, \gor\ ring is defined by Bass \cite{bas}.
A \gor\  local ring is by definition a commutative Noetherian local ring whose self injective dimension is finite.
A \gor\ ring is a \cm\ ring whose parameter ideal is irreducible
and has very beautiful properties especially concerning symmetry of related objects such as syzygies and dualities.

In pursuing the study of \gor\ and \cm\ rings, many researchers felt that there is a rather large gap between
\gor\ and \cm\ properties.
Thus, there were attempts to define notions between \gor\ and \cm\ properties and fill this gap.
The first one is the level property defined by Stanley \cite{sta1}.
However, level property can be defined only for semi-standard graded rings over a field.

After that, almost \gor\ property \cite{bf, gmp, gtt}
and nearly \gor\ property \cite{hhs} are defined.
However, there are few rings that are non-\gor\ but almost or nearly \gor.

In this paper, we define a new notion, that we call canonical trace radical (CTR for short) property,
which stands between \gor\ and \cm\ properties.
We define a \cm\ local ring is CTR if it admits a canonical module and its trace ideal is a radical ideal.
We show that, under mild assumptions, CTR property is retained under the frequently used operations
of Noetherian rings, such as localization, flat extension, tensor product, Segre product and some others.
We also show that in some combinatorial rings, combinatorial property corresponding to CTR property is
close to combinatorial property corresponding to \gor\ property.

Recently, Esentepe \cite{ese} treated the radical property of the trace ideal of the canonical module of a
\cm\ ring in relation to Auslander-Reiten conjecture.

This paper is organized as follows.
In \S2, we establish notation and terminology used in this paper and recall some basic facts,
especially the trace of a module.
In \S3, we define the canonical trace radical (CTR for short) property and study under what kind of
ring operations, with sometimes additional assumptions, CTR property is retained:
localization, flat extension, polynomial extension, completion, division by a regular sequence, tensor product
and Segre product.

In \S\S4 and 5, we state criteria of CTR property for certain classes of rings which motivated us to define
CTR property.
In \S4, we deal with Schubert cycles, i.e. the homogeneous coordinate rings of the Schubert
subvarieties of Grassmannians:
let $R$ be a Schubert cycle.
Then by \cite[\S8]{bv}, it is known that $R$ is a \cm\ normal domain and there are height 1 prime ideals
$P_0$, $P_1$, \ldots, $P_t$ such that the divisor class group $\Cl(R)$ is generated by 
$\cl(P_0)$, $\cl(P_1)$, \ldots, $\cl(P_t)$ and $\sum_{i=0}^t\cl(P_i)=0$ is the only relation between
them.
Let $\sum_{i=0}^t\kappa_i\cl(P_i)$ be the canonical class of $\Cl(R)$,
$\kappa=\max\{\kappa_i : 0\leq i\leq t\}$ and
$\kappa'=\min\{\kappa_i : 0\leq i\leq t\}$.
Then $\kappa-\kappa'$ is independent of the representation of the canonical class above.
It is known that $R$ is \gor\ if and only if $\kappa-\kappa'=0$.
We show that $R$ is CTR if and only if $\kappa-\kappa'\leq 1$.
See Theorem \ref{thm:schubert}.

In \S5, we deal with the Ehrhart ring of the stable set polytope of a cycle graph:
let $R$ be such a ring.
It is known that $R$ is \gor\ if and only if the length $n$ of the cycle is even or less than 7.
We show that $R$ is CTR if and only if  $n$ is even or less than 9.
See Theorem \ref{thm:cycle graph}.

Finally in \S6, we state a necessary condition that the Ehrhart ring $R$ of the stable set polytope of 
a perfect graph $G$ is CTR:
set $k=\max\{|K|:$ $K$ is a maximal clique in $G\}$ and
$k'=\min\{|K|:$ $K$ is a maximal clique in $G\}$.
It is known that $R$ is \gor\ if and only if $k-k'=0$.
We show that if $R$ is CTR, then $k-k'\leq 1$.
See Proposition \ref{prop:perfect graph}.


\section{Preliminaries}

\mylabel{sec:pre}

In this section, we establish notation and terminology used in this paper.
For unexplained term of commutative algebra, we consult \cite{mat} and \cite{bh}.

All rings and algebras are assumed to be commutative with identity element and
Noetherian.
We denote the set of nonnegative integers, 
the set of integers, 
the set of rational numbers and 
the set of real numbers
by $\NNN$, $\ZZZ$, $\QQQ$ and $\RRR$ respectively.

For a set $X$, we denote by $|X|$ the cardinality of $X$.
For sets $X$ and $Y$, we denote by $X\setminus Y$ the set $\{x\in X: x\not\in Y\}$.
For nonempty sets $X$ and $Y$, we denote the set of maps from $X$ to $Y$ by $Y^X$.
If $X$ is a finite set, we identify $\RRR^X$ with the Euclidean space 
$\RRR^{|X|}$.
For $f$, $f_1$, $f_2\in\RRR^X$ and $a\in \RRR$,
we define
maps $f_1\pm f_2$ and $af$ 
by
$(f_1\pm f_2)(x)=f_1(x)\pm f_2(x)$ and
$(af)(x)=a(f(x))$
for $x\in X$.
Let $A$ be a subset of $X$.
We define the characteristic function $\chi_A\in\RRR^X$ of $A$ by
$\chi_A(x)=1$ for $x\in A$ and $\chi_A(x)=0$ for $x\in X\setminus A$.
For a nonempty subset $\msXXX$ of $\RRR^X$, we denote by $\conv\msXXX$
the convex hull 
of $\msXXX$.

Next we fix notation about Ehrhart rings.
Let $\KKK$ be a field, $X$ a finite set  
and $\msPPP$ a rational convex polytope in $\RRR^X$, i.e. 
a convex polytope whose vertices are contained in $\QQQ^X$.
Let $-\infty$ be a new element
with $-\infty\not\in X$ and
set $X^-\define X\cup\{-\infty\}$.
Also let $\{T_x\}_{x\in X^-}$ be
a family of indeterminates indexed by $X^-$.
For $f\in\ZZZ^{X^-}$, 
we denote the Laurent monomial 
$\prod_{x\in X^-}T_x^{f(x)}$ by $T^f$.
We set $\deg T_x=0$ for $x\in X$ and $\deg T_{-\infty}=1$.
Then the Ehrhart ring of $\msPPP$ over a field $\KKK$ is the $\NNN$-graded subring
$$
\KKK[T^f : f\in \ZZZ^{X^-}, f(-\infty)>0, \frac{1}{f(-\infty)}f|_X\in\msPPP]
$$
of the Laurent polynomial ring $\KKK[T_x^{\pm1} : x\in X^-]$,
where $f|_X$ is the restriction of $f$ to $X$.
We denote the Ehrhart ring of $\msPPP$ over $\KKK$ by $\ekp$.
If $X$ is a poset, we define the order on $X^-$ by $-\infty<x$ for any $x\in X$.

Let $R$ be a ring.
For $\pppp\in\spec(R)$, we denote by $\kappa(\pppp)$ the quotient field
$R_\pppp/\pppp R_\pppp$ of $R/\pppp$.
For an ideal $I$ of $R$, we denote by $\min(I)$ the set of minimal over primes of $I$.
For a ring $R$ and a matrix $M$ with entries in $R$, we denote by $I_t(M)$ the ideal
of $R$ generated by $t$-minors of $M$.

An $\NNN$-graded ring $R=\bigoplus_{n\in\NNN}R_n$ is said to be an $\NNN$-graded
$\KKK$-algebra if $R_0=\KKK$.
An $\NNN$-graded $\KKK$-algebra $R=\bigoplus_{n\in\NNN}R_n$ is said to be standard 
graded if $R=\KKK[R_1]$.
For an $\NNN$-graded ring $R$, a greded $R$-module $M$ and $m\in\ZZZ$, we denote
by $M_{\geq m}$ the graded $R$-submodule $\bigoplus_{n\geq m}M_n$ of $M$.
When $R$ is a \cm\ local ring with a canonical module or an $\NNN$-graded algebra
over a field, we denote by $\omega_R$ the (graded) canonical module of $R$.
If $R$ is an $\NNN$-graded algebra and $\omega_R$ is the canonical module of $R$,
$-\min\{m:(\omega_R)_m\neq 0\}$ is called the $a$-invariant of $R$ and denoted
by $a(R)$.
See \cite{gw}.
For $\NNN$-graded $\KKK$-algebras $R^{(1)}$, \ldots,  $R^{(m)}$, we denote by
$R^{(1)}\#\cdots\#R^{(m)}$ the Segre product 
$\bigoplus_{n\in\NNN}R^{(1)}_n\otimes\cdots\otimes R^{(m)}_n$ of $R^{(1)}$, \ldots, $R^{(m)}$.

Let $R=\bigoplus_{n\in\NNN}R_n$ be a standard graded $\KKK$-algebra,
where $\KKK$ is a field.
We say that $R$ is level if the graded canonical module $\omega_R$ is generated in one degree.
If $\hom_R(\omega_R, R)$ (whice is a graded module. See \cite{gw}.) is generated in one degree,
we say that $R$ is anticanonical level.
Level and anticanonical level properties are independent,
see \cite{pag, mfiber, mchain}.
We denote $\hom_R(\omega_R, R)$ by $\omega_R^{-1}$.

Now we recall the following.

\begin{definition}
\rm
Let $R$ be a ring and $M$ an $R$-module.
We define the trace of $M$ denoted by $\trace_R(M)$ by
$$
\trace_R(M)\define\sum_{\varphi\in\hom_R(M,R)}\varphi(M).
$$
If $R$ is clear from context, we omit the subscript $R$ and denote $\trace(M)$.
\end{definition}
It follows from the definition, the following.

\begin{lemma}
\mylabel{lem:tr basic}
Let $R$ be a ring and $M$ an $R$-module.
Then $\trace_R(M)$ is the image of the canonical map
$$
\hom_R(M, R)\otimes M\to R, \quad f\otimes m\mapsto f(m).
$$
In particular, if $M$ is a finitely generated $R$-module and $S$ is a flat $R$-algebra,
then $\trace_S(M\otimes S)=\trace_R(M)S$ and therefore for $\pppp\in\spec(R)$,
$\trace_{R_\pppp}(M_\pppp)=\trace_R(M)_\pppp$.
\end{lemma}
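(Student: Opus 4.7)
The plan is to treat the two halves of the lemma separately: the first part is essentially a restatement of the definition, while the second part reduces to the standard flat base change for $\hom$.

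For the first assertion, I would observe that the image of the canonical map $\hom_R(M,R)\otimes_R M\to R$ is the $R$-submodule of $R$ generated by all products $\varphi(m)$ with $\varphi\in\hom_R(M,R)$ and $m\in M$. Since each $\varphi$ is $R$-linear, $\varphi(M)$ is already an $R$-submodule of $R$, and hence this image coincides with $\sum_{\varphi}\varphi(M)=\trace_R(M)$.

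For the second assertion I would use that $M$ finitely generated over the Noetherian ring $R$ is in fact finitely presented. Picking a finite presentation $R^n\to R^m\to M\to 0$, applying $\hom_R(-,R)$ and then $-\otimes_R S$ (left exactness preserved by flatness of $S$) and comparing with the result of tensoring the presentation by $S$ first and then applying $\hom_S(-,S)$ gives the standard natural isomorphism
$$
\hom_R(M,R)\otimes_R S\xrightarrow{\ \sim\ }\hom_S(M\otimes_R S,\,S),
$$
compatible with the evaluation maps. Tensoring the canonical map $\hom_R(M,R)\otimes_R M\to R$ with $S$ and invoking this isomorphism on the first factor, I would conclude that the image $\trace_R(M)\otimes_R S$, identified with the extended ideal $\trace_R(M)S\subset S$ by flatness of $S$, coincides with the image of the $S$-module canonical map $\hom_S(M\otimes_R S,S)\otimes_S(M\otimes_R S)\to S$. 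By the first part applied over $S$, the latter image is $\trace_S(M\otimes_R S)$. This yields $\trace_S(M\otimes_R S)=\trace_R(M)S$, and the localization statement follows as the special case $S=R_{\pppp}$.

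The only point requiring real care is verifying that the flat base change isomorphism for $\hom$ actually intertwines the two canonical evaluation maps; this is a routine diagram chase once the isomorphism has been reduced to a free presentation of $M$, where both sides become elementary identifications with finite direct sums of copies of $S$. Everything else is bookkeeping.
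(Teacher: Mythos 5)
Your proposal is correct and follows the same (standard) route the paper relies on: the paper states the first part as immediate from the definition and delegates the flat base change statement to \cite[Lemma 1.5 (iii)]{hhs}, whose proof is exactly your argument via the natural isomorphism $\hom_R(M,R)\otimes_R S\cong\hom_S(M\otimes_R S,S)$ for a finitely presented module (finite generation suffices here since all rings are assumed Noetherian) over a flat algebra. Your check that this isomorphism is compatible with the evaluation maps is the only point of substance, and it is handled correctly.
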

Flat extension part of this lemma is shown in \cite[Lemma 1.5 (iii)]{hhs}.
Further, if $I$ is an ideal of $R$ and contains an $R$-regular element $b$, then
$\hom_R(I, R)\cong \{x\in (1/b)R : xI\subset R\}\subset Q(R)$,
where $Q(R)$ is the total quotient ring of $R$,
 and therefore, 
$\trace_R(I)=I\{x\in (1/b)R : xI\subset R\}$.
See the proof of \cite[Lemma 1.1]{hhs}.

Let $R$ be a normal domain.
For divisorial fractionary ideal $I$ of $R$ and $n\in\ZZZ$, we denote by
$I^{(n)}$ the $n$-th power of $I$ in $\Div(R)$.
Note that if $I$ is a height 1 prime ideal of $R$ and $n>0$, then
$I^{(n)}$ coincide with the $n$-the symbolic power of $I$.
Further, 
by the argument in the previous paragraph,
$\trace_R(I)=II^{(-1)}$ for any divisorial fractionary ideal $I$.

Next we state a tool to compute the trace of a canonical module which is a
generalization of \cite[Corollary 3.2]{hhs}.
A homomorphism $\varphi\colon F\to G$ of finitely generated free modules over a
ring can be expressed by a matrix by fixing bases of $F$ and $G$.
Let $M$ be such a matrix and $t$ a positive integer with $t\leq\min\{\rank F, \rank G\}$.
Then the ideal $I_t(M)$ is independent of the choice of bases of $F$ and $G$.
We denote this ideal by $I_t(\varphi)$.

\begin{lemma}
\mylabel{lem:hhs 3.2}
Let $S$ be a \gor\ local ring ($\NNN$-graded ring over a field), $J$ a (homogeneous) ideal of $S$ 
such that $R=S/J$ is a \cm\ ring.
Suppose that there exists a finite (graded) $S$-free resolution
$$
0\to F_h\stackrel{\varphi_h}{\to} \cdots\stackrel{\varphi_1}{\to} F_0\to R\to 0
$$
of $R$ with $h=\dim S-\dim R$.
Let $G$ be a free $R$-module and
$\psi\colon G\to F_h\otimes R$ a (graded) $R$-homomorphism with
$$
G\stackrel{\psi}{\to} F_h\otimes R\stackrel{\varphi_h\otimes 1}{\to}F_{h-1}\otimes R
$$
is exact.
Then $\trace_R(\omega_R)=I_1(\psi)$.
\end{lemma}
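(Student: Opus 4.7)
The plan is to compute $\omega_R$ from the given free resolution, use duality to identify $\omega_R^{-1}$ with $\ker(\varphi_h\otimes 1)=\image(\psi)$, and then read off the trace ideal from the evaluation pairing described after Lemma~\ref{lem:tr basic}.

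Since $S$ is \gor, $\omega_S$ is isomorphic to $S$ up to a (graded) shift. Since $R$ is \cm\ of codimension $h=\dim S-\dim R$ and the given $S$-free resolution of $R$ has length $h$, the modules $\ext^i_S(R,\omega_S)$ vanish for $i\neq h$ (below $h$ by the depth sensitivity of Ext together with $\grade_S J=h$; above $h$ by the length of the resolution). Hence $\omega_R\cong\ext^h_S(R,\omega_S)$ is realized, up to the irrelevant shift, as $\coker(\varphi_h^\ast\colon F_{h-1}^\ast\to F_h^\ast)$. This cokernel is annihilated by $J$, hence carries an $R$-module structure and an $R$-module presentation
\[
F_{h-1}^\ast\otimes R\;\xrightarrow{\varphi_h^\ast\otimes 1}\;F_h^\ast\otimes R\;\to\;\omega_R\;\to\;0.
\]
Applying $\hom_R(-,R)$ to this presentation, and using the canonical identifications $\hom_R(F_i^\ast\otimes R,R)\cong F_i\otimes R$ together with the fact that the $R$-dual of $\varphi_h^\ast\otimes 1$ is $\varphi_h\otimes 1$, the resulting left-exact sequence identifies $\omega_R^{-1}$ with $\ker(\varphi_h\otimes 1)$, which by hypothesis equals $\image(\psi)\subset F_h\otimes R$.

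By Lemma~\ref{lem:tr basic}, $\trace_R(\omega_R)$ is the image of the evaluation pairing $\omega_R\otimes_R\omega_R^{-1}\to R$. Fixing bases $e_1,\ldots,e_n$ of $F_h$ and the dual basis of $F_h^\ast$, I would write $\phi\in\image(\psi)\subset F_h\otimes R$ as $\sum_i r_ie_i$ and represent a class in $\omega_R$ by $\alpha=\sum_j a_je_j^\ast\in F_h^\ast\otimes R$; under the canonical identifications above, the pairing sends this class against $\phi$ to $\sum_j a_jr_j\in R$. With $\phi$ fixed and $\alpha$ varying over $F_h^\ast\otimes R$, one obtains the ideal $(r_1,\ldots,r_n)\subset R$, and then summing over all $\phi\in\image(\psi)$ produces exactly the ideal generated by the entries of any matrix representing $\psi$, namely $I_1(\psi)$. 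No step is genuinely hard; the points that most deserve attention are the vanishing of $\ext^i_S(R,\omega_S)$ for $i\neq h$, which is what makes $\omega_R$ appear as a simple cokernel at the top of the dualized resolution, and the observation that the grading shifts in $\omega_S$ and $\omega_R$ in the $\NNN$-graded case affect only degrees and not the ideal $\trace_R(\omega_R)$, so the local and graded cases are handled uniformly, recovering \cite[Corollary 3.2]{hhs} as a special case.
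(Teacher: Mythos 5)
Your proof is correct and takes essentially the same route as the paper: identify $\omega_R\cong\ext^h_S(R,S)$ with the cokernel of the dual of the last map of the resolution, then read the trace off that presentation via the evaluation pairing. The only difference is that you verify by hand the presentation-to-trace step that the paper delegates to \cite[Proposition 3.1]{hhs}, so no comparison beyond that is needed.
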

\begin{proof}
Since $\omega_R=\ext_S^h(R,\omega_S)=\ext_S^h(R,S)$,
we see that $\omega_R\cong\coker((\varphi_h\otimes 1)^\ast)$.
Therefore, by \cite[Proposition 3.1]{hhs}, we see the result.
\end{proof}


\section{Canonical trace radical rings}

In this section, we define the notion of canonical trace radical rings
(CTR rings for short) and study basic properties of CTR rings.
The reasons that we think CTR property is close to \gor\ property is shown in the following sections.
First we recall the following.

\begin{fact}
\mylabel{fact:gor trace}
Let $R$ be a \cm\ local ring with canonical module $\omega_R$.
Then $R$ is \gor\ if and only if $\trace_R(\omega_R)=R$.
\end{fact}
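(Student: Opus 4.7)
The plan is to prove the two implications separately. The forward direction is immediate: if $R$ is \gor\ then $\omega_R\cong R$, so the identity map of $R$ corresponds under the isomorphism to an element of $\hom_R(\omega_R,R)$ whose image is all of $R$, giving $\trace_R(\omega_R)=R$.

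For the converse, assume $\trace_R(\omega_R)=R$, so there exist $\varphi_1,\ldots,\varphi_n\in\hom_R(\omega_R,R)$ and $x_1,\ldots,x_n\in\omega_R$ with $\sum_i\varphi_i(x_i)=1$. Because $R$ is local and its maximal ideal is closed under addition, at least one of the summands $\varphi_j(x_j)$ must be a unit, so the corresponding map $\varphi_j\colon\omega_R\to R$ is surjective. Since $R$ is free of rank one over itself, this surjection splits, yielding a decomposition $\omega_R\cong R\oplus K$ for some submodule $K$ of $\omega_R$.

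The remaining step is to show $K=0$. Here I would invoke the classical identification $\hom_R(\omega_R,\omega_R)\cong R$ recorded in \cite{bh}: a nontrivial direct sum decomposition of $\omega_R$ produces a nontrivial idempotent in $\hom_R(\omega_R,\omega_R)=R$, contradicting the locality of $R$. Hence $K=0$, so $\omega_R\cong R$ and $R$ is \gor. The main obstacle is precisely this indecomposability of $\omega_R$; if one prefers not to quote the endomorphism-ring computation, an alternative is to localize at each minimal prime $\pppp$ of $R$, where $R_\pppp$ is Artinian and $(\omega_R)_\pppp$ has the same length as $R_\pppp$, forcing $K_\pppp=0$, and then use the inclusion $\ass_R(K)\subset\ass_R(\omega_R)=\ass_R(R)=\min((0))$, valid because $R$ is \cm, to conclude that $K=0$ globally.
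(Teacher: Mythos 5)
Your proof is correct. The paper itself gives no argument for this fact; it simply refers to \cite[Lemma 2.1]{hhs}, and what you have written is essentially the standard proof of that cited lemma: since $R$ is local, $\trace_R(\omega_R)=R$ forces some $\varphi_j(x_j)$ to be a unit, the resulting surjection $\omega_R\to R$ splits, and the complementary summand $K$ dies because $\hom_R(\omega_R,\omega_R)\cong R$ has no nontrivial idempotents (\cite[Theorem 3.3.4]{bh}). Both of the ingredients you invoke are valid under the stated hypotheses, and your alternative route (equality of lengths over the Artinian localizations at minimal primes together with $\ass_R(K)\subset\ass_R(\omega_R)=\ass(R)=\min((0))$ for the \cm\ ring $R$) is also sound, so either version closes the argument.
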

For the proof, see e.g. \cite[Lemma 2.1]{hhs}.

\begin{definition}
\mylabel{def:ctr}
\rm
Let $R$ be a \cm\ ring.
If for any $\pppp\in\spec(R)$, $R_\pppp$ has a canonical module $\omega_{R_\pppp}$ and
$\trace_{R_\pppp}(\omega_{R_\pppp})$ is a radical ideal,
then we say that $R$ is a canonical trace radical (CTR for short) ring.
\end{definition}
Since the unit ideal is a radical ideal, a \gor\ ring is a CTR ring.
Further, it is evident from the definition that CTR property is kept by localization.

By Lemma \ref{lem:tr basic}, we see the following.

\begin{prop}
\mylabel{prop:local}
 Let $(R,\mmmm)$ be a \cm\ local ring with canonical module.
 Then $R$ is CTR if and only if $\trace_R(\omega_R)$ is a radical ideal.
 \end{prop}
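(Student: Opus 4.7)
The plan is to reduce the ``for all primes'' condition in the definition of CTR to the single condition on $\trace_R(\omega_R)$ by exploiting that (i) the formation of canonical modules commutes with localization for a CM local ring admitting a canonical module, (ii) the formation of the trace commutes with flat base change by Lemma \ref{lem:tr basic}, and (iii) the radical property of an ideal is preserved under localization.

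The forward direction is immediate: specializing the defining condition of CTR at $\pppp=\mmmm$ (where $R_\mmmm=R$ since $R$ is already local) gives that $\trace_R(\omega_R)$ is radical. For the reverse direction, assume $\trace_R(\omega_R)$ is a radical ideal, and let $\pppp\in\spec(R)$ be arbitrary. First I would invoke the standard fact that if $R$ is a CM local ring with canonical module $\omega_R$, then $(\omega_R)_\pppp$ is a canonical module of $R_\pppp$, so in particular $R_\pppp$ admits a canonical module $\omega_{R_\pppp}\cong(\omega_R)_\pppp$. Next, since $R_\pppp$ is a flat $R$-algebra and $\omega_R$ is finitely generated, Lemma \ref{lem:tr basic} yields
\[
\trace_{R_\pppp}(\omega_{R_\pppp})=\trace_{R_\pppp}((\omega_R)_\pppp)=\trace_R(\omega_R)_\pppp=\bigl(\trace_R(\omega_R)\bigr)_\pppp.
\]

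Finally, since localization of a radical ideal is again radical (a standard commutative algebra fact: if $I=\sqrt{I}$ in $R$, then $I_\pppp=\sqrt{I_\pppp}$ in $R_\pppp$, because localization commutes with taking nilradicals of quotients), the right-hand side is radical in $R_\pppp$. Hence $\trace_{R_\pppp}(\omega_{R_\pppp})$ is radical for every $\pppp$, and $R$ is CTR. There is no genuine obstacle in this argument; the proposition is essentially a bookkeeping statement that packages Lemma \ref{lem:tr basic} together with the localization behavior of canonical modules and radical ideals, and the short proof merely cites these three ingredients.
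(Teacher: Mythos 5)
Your argument is correct and is essentially identical to the paper's own proof: the forward direction by specializing the definition at $\mmmm$, and the reverse direction by combining $\omega_{R_\pppp}=(\omega_R)_\pppp$ with Lemma \ref{lem:tr basic} and the fact that localizing a radical ideal yields a radical ideal. No differences worth noting.
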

 \begin{proof}
 ``Only if'' part is a direct consequence of the definition.
 We prove the ``if'' part.
 Let $\pppp$ be an arbitrary prime ideal of $R$.
 By Lemma \ref{lem:tr basic}, we see that
 $\trace_{R_\pppp}((\omega_R)_\pppp)=\trace_R(\omega_R)_\pppp$.
 Since $\omega_{R_\pppp}=(\omega_R)_\pppp$ and $\trace_R(\omega_R)_\pppp$ is 
 a radical ideal of $R_\pppp$, we see the result.
 \end{proof}
Next we show a similar fact to the above proposition, which may be regarded as a graded
version of the above proposition.

\begin{prop}
\mylabel{prop:graded}
Let $R$ be an $\NNN$-graded \cm\ ring over a field $\KKK$ and $\omega_R$ the graded
canonical module of $R$.
Then $R$ is CTR if and only if $\trace_R(\omega_R)$ is a radical ideal.
\end{prop}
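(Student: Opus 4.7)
The plan is to imitate the proof of Proposition \ref{prop:local} essentially verbatim, once we invoke two standard facts: that the radical property of an ideal can be checked after localization at each prime, and that the graded canonical module of a graded \cm\ $\KKK$-algebra $R$ localizes to the (ungraded) canonical module of $R_\pppp$ at \emph{every} prime $\pppp$ of $R$, not merely the homogeneous ones. The latter is a consequence of the Goto--Watanabe theory cited in \cite{gw}: if $\omega_R$ is a graded canonical module of $R$, then $(\omega_R)_\pppp$ is a canonical module of $R_\pppp$ for each $\pppp\in\spec(R)$.

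With these two facts in hand, the ``only if'' direction is immediate. If $R$ is CTR, then for every $\pppp\in\spec(R)$ the ideal $\trace_{R_\pppp}(\omega_{R_\pppp})$ is radical. Using Lemma \ref{lem:tr basic} applied to the flat extension $R\to R_\pppp$, together with $(\omega_R)_\pppp=\omega_{R_\pppp}$, one obtains
$$
\trace_R(\omega_R)_\pppp=\trace_{R_\pppp}((\omega_R)_\pppp)=\trace_{R_\pppp}(\omega_{R_\pppp}),
$$
so $\trace_R(\omega_R)_\pppp$ is radical for each $\pppp$, and hence $\trace_R(\omega_R)$ is radical.

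Conversely, suppose $\trace_R(\omega_R)$ is radical. Fix an arbitrary prime $\pppp$ of $R$. Localization preserves radical ideals, so $\trace_R(\omega_R)_\pppp$ is a radical ideal of $R_\pppp$; by the same chain of equalities above, this is exactly $\trace_{R_\pppp}(\omega_{R_\pppp})$. Therefore $R$ is CTR.

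The only step that requires real attention, as opposed to a direct appeal to Lemma \ref{lem:tr basic} and Proposition \ref{prop:local}, is the identification $(\omega_R)_\pppp=\omega_{R_\pppp}$ at \emph{non-homogeneous} primes $\pppp$; the homogeneous case is standard from \cite{gw}, and the non-homogeneous case follows because a canonical module is characterized up to isomorphism and its formation commutes with flat base change. Once this is acknowledged, no further obstacle remains.
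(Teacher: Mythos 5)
Your argument reduces everything to the claim that $(\omega_R)_\pppp$ is a canonical module of $R_\pppp$ for \emph{every} prime $\pppp$ of $R$, including the non-homogeneous ones, and this is exactly the point you do not prove. Your justification --- ``a canonical module is characterized up to isomorphism and its formation commutes with flat base change'' --- does not apply as stated: the flat base change theorem for canonical modules (\cite[Theorem 3.3.14]{bh}) concerns local homomorphisms of \cm\ \emph{local} rings, and the localization property concerns a canonical module of a local ring being localized further. To invoke either one here you would first need to know that the graded canonical module $\omega_R$ is a canonical module of the non-local ring $R$, i.e.\ that it localizes correctly at every maximal ideal, including the non-graded ones not contained in the irrelevant ideal $\mmmm$ --- which is precisely the assertion in question. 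The Goto--Watanabe theory in \cite{gw} gives the identification at $\mmmm$ and at graded primes, not at arbitrary primes, so the citation does not close the gap either. The claim is in fact true, but it has real content: it is here that one must use that a graded \cm\ $\KKK$-algebra is equidimensional, so that the codimension of $R$ in a graded polynomial presentation does not change under localization; without such an argument nothing guarantees that $\ext$ in the ``right'' degree computes $\omega_{R_\pppp}$.

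This is exactly the work the paper's proof does. For the ``only if'' direction it avoids non-homogeneous primes altogether: it localizes only at $\mmmm$, where $(\omega_R)_\mmmm=\omega_{R_\mmmm}$ is standard, and then uses that $\trace_R(\omega_R)$ is a graded ideal, so all its associated primes are graded and contained in $\mmmm$, whence radicality at $\mmmm$ gives radicality globally. For the ``if'' direction it takes a graded surjection $S\to R$ from a weighted polynomial ring, a minimal graded free resolution of length $h=\dim S-\dim R$, and a map $\psi$ as in Lemma \ref{lem:hhs 3.2}; the key observation is that $\ass R=\assh R$ for a graded \cm\ algebra, so $h=\dim S_P-\dim R_\pppp$ for the preimage $P$ of any $\pppp$, and therefore Lemma \ref{lem:hhs 3.2} applies both to $R$ and to $R_\pppp$, giving $\trace_{R_\pppp}(\omega_{R_\pppp})=I_1(\psi_\pppp)=\trace_R(\omega_R)_\pppp$ (this simultaneously shows $R_\pppp$ has a canonical module, which the CTR definition also requires). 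Your outline would become correct if you either reproduced an argument of this kind or cited a precise result stating that the graded canonical module of a graded \cm\ $\KKK$-algebra is a canonical module of $R$ in the non-local sense; as written, the crucial step is only asserted.
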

\begin{proof}
We first prove the ``only if'' part.
Let $\mmmm$ be the irrelevant maximal ideal of $R$.
Then $(\omega_R)_\mmmm$ is the canonical module of $R_\mmmm$.
Therefore, by assumption and Lemma \ref{lem:tr basic},
$\trace_R(\omega_R)_\mmmm=\trace_{R_\mmmm}((\omega_R)_\mmmm)$ is a radical ideal.
Since $\trace_R(\omega_R)$ is a graded ideal, every associated prime of 
$\trace_R(\omega_R)$ is graded and therefore contained in $\mmmm$.
Thus, the radical property of $\trace_R(\omega_R)_\mmmm$ implies the radical property 
of $\trace_R(\omega_R)$.

Now we prove the ``if'' part.
Let $\pppp$ be an arbitrary prime ideal of $R$.
Take a polynomial ring $S$ with weighted degree over $\KKK$ and a graded surjective
$\KKK$-algebra homomorphism $S\to R$.
Let $P$ be the preimage of $\pppp$.
Also take a minimal graded $S$-free resolution
$$
0\to F_h\stackrel{\varphi_h}{\to}\cdots\stackrel{\varphi_1}{\to} F_0\to R\to 0
$$
of $R$, a free $R$-module $G$ and a graded $R$-homomorphism $\psi$ such that
$$
G\stackrel{\psi}{\to} F_h\otimes R \stackrel{\varphi_h\otimes 1}{\to} F_{h-1}\otimes R
$$
is exact.
Then
$$
0\to (F_h)_P\stackrel{(\varphi_h)_P}{\to}\cdots\stackrel{(\varphi_1)_P}{\to} (F_0)_P\to R_\pppp\to 0
$$
is a (not necessarily minimal) $S_P$-free resolution of $R_\pppp$ and
$$
G_\pppp\stackrel{\psi_\pppp}{\to} (F_h)_P\otimes_{S_P} R_\pppp \stackrel{(\varphi_h)_P\otimes 1}{\to} 
(F_{h-1})_P\otimes_{S_P} R_\pppp
$$
is exact.

Since $R$ is \cm, it follows that $\ass R=\assh R$ and therefore
$h=\dim S-\dim R=\dim S_P-\dim R_\pppp$.
Thus, by Lemma \ref{lem:hhs 3.2}, we see that $\trace_{R_\pppp}(\omega_{R_\pppp})=I_1(\psi_\pppp)$.
Since $\trace_R(\omega_R)=I_1(\psi)$ by Lemma \ref{lem:hhs 3.2} and $\trace_R(\omega_R)$
is a radical ideal, we see that
$$
\trace_{R_\pppp}(\omega_{R_\pppp})=I_1(\psi_\pppp)=(I_1(\psi))_\pppp=\trace_R(\omega_R)_\pppp
$$
is a radical ideal.
\end{proof}
By Propositions \ref{prop:local} and \ref{prop:graded}, we see that nearly \gor\ rings are CTR rings.

Next we consider the CTR property under the flat extension.

\begin{prop}
\mylabel{prop:flat ext}
Let $(R,\mmmm)$ be a local ring with canonical module and 
$(R,\mmmm)\to(S,\nnnn)$ be a flat local homomorphism.
Suppose $S/\mmmm S$ is a \gor\ ring.
Then the followings hold.
\begin{enumerate}
\item
\mylabel{item:down}
If $S$ is CTR, then so is $R$.
\item
\mylabel{item:up}
If $R$ is CTR and for any $\pppp\in\min(\trace_R(\omega_R))$, 
$S_\pppp/\pppp S_\pppp=\kappa(\pppp)\otimes_R S$ is a reduced ring, then $S$ is CTR.
\end{enumerate}
\end{prop}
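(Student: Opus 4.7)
The plan is to reduce both parts to a single computation of $\trace_S(\omega_S)$. Under the hypothesis that $R\to S$ is flat local with Gorenstein closed fibre and that $R$ admits a canonical module, the standard base-change theorem for canonical modules (Bruns--Herzog, Theorem 3.3.14; or Aoyama's work on canonical modules under flat extensions) gives $\omega_S\cong\omega_R\otimes_R S$. Combining this with Lemma \ref{lem:tr basic}, I would obtain
\[
\trace_S(\omega_S)=\trace_S(\omega_R\otimes_R S)=\trace_R(\omega_R)\cdot S.
\]
Set $I\define\trace_R(\omega_R)$. By Proposition \ref{prop:local}, being CTR for the local rings $R$ and $S$ is equivalent to $I$ being a radical ideal of $R$ and to $IS$ being a radical ideal of $S$, respectively. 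So the problem reduces to transferring the radical property across the flat extension.

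For (\ref{item:down}), I would use faithful flatness. A flat local homomorphism between local rings is faithfully flat, hence $IS\cap R=I$. If $x\in R$ satisfies $x^n\in I$, then $x^n\in IS$; since $IS$ is radical in $S$ by hypothesis, $x\in IS$, and therefore $x\in IS\cap R=I$. Thus $I$ is radical, so $R$ is CTR.

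For (\ref{item:up}), since $R$ is CTR, $I$ is already radical and we may write $I=\pppp_1\cap\cdots\cap\pppp_k$ with $\{\pppp_1,\ldots,\pppp_k\}=\min(I)$. Flatness of $R\to S$ commutes with finite intersections of ideals, giving
\[
IS=\pppp_1S\cap\cdots\cap\pppp_kS,
\]
so it suffices to show each $\pppp_iS$ is radical, i.e.\ that $S/\pppp_iS$ is reduced. The ring $S/\pppp_iS$ is flat over the domain $R/\pppp_i$, hence injects into its localization
\[
S/\pppp_iS\otimes_{R/\pppp_i}\kappa(\pppp_i)=\kappa(\pppp_i)\otimes_R S,
\]
which is reduced by the standing hypothesis. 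Consequently $S/\pppp_iS$ is reduced, $\pppp_iS$ is radical, and a finite intersection of radical ideals is radical, so $IS$ is radical and $S$ is CTR by Proposition \ref{prop:local}.

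The only genuinely non-formal step, and the expected main obstacle, is the identification $\omega_S\cong\omega_R\otimes_R S$ for a flat local extension with Gorenstein closed fibre; one must invoke this external result to convert the hypothesis on $\trace_S(\omega_S)$ into one on $\trace_R(\omega_R)\cdot S$. Once this and Lemma \ref{lem:tr basic} are in place, the remainder is routine flat-extension bookkeeping: faithful flatness gives contraction of radicals in (\ref{item:down}), and the intersection formula $(\bigcap J_i)S=\bigcap J_iS$ together with the injection of a domain-flat module into its fraction-field extension gives (\ref{item:up}).
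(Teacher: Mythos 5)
Your proof is correct. For part \ref{item:down} you follow essentially the paper's route: base change $\omega_S\cong\omega_R\otimes_RS$ (valid since the closed fibre is \gor), Lemma \ref{lem:tr basic}, and faithful flatness giving $\trace_R(\omega_R)S\cap R=\trace_R(\omega_R)$, which is just an element-wise restatement of the paper's observation that $R/\trace_R(\omega_R)$ embeds into $S/\trace_S(\omega_S)$. For part \ref{item:up} you take a genuinely different path. The paper works on the $S$-side: for each associated prime $P$ of $\trace_S(\omega_S)$ it contracts to $\pppp=P\cap R$, uses the depth inequality for flat local homomorphisms to see that $\pppp$ is an associated, hence (by radicality) minimal, prime of $\trace_R(\omega_R)$, identifies $(S/\trace_S(\omega_S))_P$ with a localization of $\kappa(\pppp)\otimes_RS$, and concludes by checking reducedness at associated primes. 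You instead work on the $R$-side: write the radical ideal $\trace_R(\omega_R)=\pppp_1\cap\cdots\cap\pppp_k$ over its minimal primes, use that flat base change commutes with finite intersections of ideals to get $\trace_S(\omega_S)=\bigcap_i\pppp_iS$, and show each $S/\pppp_iS$ is reduced because it is flat, hence torsion-free, over the domain $R/\pppp_i$ and therefore embeds into $\kappa(\pppp_i)\otimes_RS$, which is reduced by hypothesis. Your argument is more elementary and self-contained (no depth formula, no reduction to associated primes of the trace in $S$), at the cost of the intersection-commutation lemma; the paper's argument localizes the reducedness hypothesis only at those primes that actually arise as contractions of associated primes of $\trace_S(\omega_S)$, which makes visible that only those fibres matter. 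Both routes rely, as you do via Proposition \ref{prop:local}, on the \cm\ property and existence of canonical modules for $R$ and $S$ being inherited across the flat map with \gor\ closed fibre.
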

\begin{proof}
First note that $\omega_R\otimes S$ is the canonical module of $S$, see \cite[Theorem 3.3.14]{bh}.
Therefore, by Lemma \ref{lem:tr basic} we see that $\trace_S(\omega_S)=\trace_R(\omega_R)S$.

\ref{item:down}
Since the natural map
$R/\trace_R(\omega_R)\to S/\trace_R(\omega_R)S=S/\trace_S(\omega_S)$
is faithfully flat, $R/\trace_R(\omega_R)$ is isomorphic to a subring of $S/\trace_S(\omega_S)$.
Since $S/\trace_S(\omega_S)$ is reduced by assumption, $R/\trace_R(\omega_R)$ is also reduced.

\ref{item:up}
Let $P$ be an arbitrary associated prime ideal of $\trace_S(\omega_S)$ and set $\pppp=P\cap R$.
Since 
$(R/\trace_R(\omega_R))_\pppp\to (S/\trace_S(\omega_S))_P$ 
is a flat local homomorphism, 
$\depth (R/\trace_R(\omega_R))_\pppp\leq\depth (S/\trace_S(\omega_S))_P=0$
by \cite[Theorem 23.3]{mat}.
Therefore, $\pppp$ is an associated prime ideal of $\trace_R(\omega_R)$.
Since $\trace_R(\omega_R)$ is a radical ideal, we see that $\pppp\in\min(\trace_R(\omega_R))$
and $(R/\trace_R(\omega_R))_\pppp=\kappa(\pppp)$.
Thus, 
since $\kappa(\pppp)\otimes_R S_P$ is a localization of $\kappa(\pppp)\otimes_R S$, 
we see by assumption that
$(S/\trace_S(\omega_S))_P=(S/\trace_R(\omega_R)S)_P=(R/\trace_R(\omega_R))_\pppp\otimes_R S_P
=\kappa(\pppp)\otimes_R S_P$
is reduced.
Since $P$ is an arbitrary associated prime ideal of $\trace_S(\omega_S)$, we see that
$\trace_S(\omega_S)$ is a radical ideal.
\end{proof}
\begin{cor}
Let $R$ be a ring and $S=R[X_1, \ldots, X_n]$ a polynomial ring over $R$.
Then $S$ is CTR if and only if so is $R$.
\end{cor}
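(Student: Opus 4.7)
My plan is to reduce to the case $n = 1$ by iteration (writing $R[X_1, \ldots, X_n] = R[X_1, \ldots, X_{n-1}][X_n]$) and then to invoke Proposition \ref{prop:flat ext}. So I focus on proving that, for a single indeterminate, $S = R[X]$ is CTR if and only if $R$ is CTR.

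The key input is that, for any $\qqqq \in \spec S$ with contraction $\pppp = \qqqq \cap R$, the induced map $R_\pppp \to S_\qqqq$ is a flat local homomorphism whose fiber $S_\qqqq / \pppp S_\qqqq$ is a localization of the principal ideal domain $\kappa(\pppp)[X]$, hence a regular ring; in particular it is Gorenstein and reduced, which supplies the Gorenstein-fiber hypothesis of Proposition \ref{prop:flat ext} at every prime of $S$.

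For the ``if'' direction, I would assume $R$ is CTR, fix $\qqqq \in \spec S$, and set $\pppp = \qqqq \cap R$. Then $R_\pppp$ has a canonical module and $\trace_{R_\pppp}(\omega_{R_\pppp})$ is radical. For each $\pppp' \in \min(\trace_{R_\pppp}(\omega_{R_\pppp}))$, the ring $\kappa(\pppp') \otimes_{R_\pppp} S_\qqqq$ is a localization of $\kappa(\pppp')[X]$, hence reduced. Proposition \ref{prop:flat ext}\ref{item:up} then gives that $S_\qqqq$ is CTR for every $\qqqq$, so $S$ itself is CTR.

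For the ``only if'' direction, I would assume $S$ is CTR and, given $\pppp \in \spec R$, set $\qqqq = \pppp S$; this is prime since $S/\pppp S \cong (R/\pppp)[X]$ is a domain, and satisfies $\qqqq \cap R = \pppp$. Then $S_\qqqq$ is CTR (as a localization of a CTR ring), and Proposition \ref{prop:flat ext}\ref{item:down} should transfer CTR from $S_\qqqq$ down to $R_\pppp$. The hard part will be verifying the standing hypothesis of Proposition \ref{prop:flat ext} that $R_\pppp$ carries a canonical module, since existence of the canonical module is itself part of what ``CTR'' asserts. For this I would appeal to the faithfully flat descent of canonical modules along flat local maps with Gorenstein closed fibers, i.e.\ the converse direction of \cite[Theorem 3.3.14]{bh} (or Aoyama's descent theorem), which lets existence descend from $S_\qqqq$ to $R_\pppp$. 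With that in hand, every $R_\pppp$ is CTR, so $R$ is CTR.
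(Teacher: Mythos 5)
Your ``if'' direction is essentially the paper's own argument (the paper treats all $n$ variables at once instead of iterating), and it is correct: the closed fiber of $R_\pppp\to S_\qqqq$ and the fibers over the minimal primes of $\trace_{R_\pppp}(\omega_{R_\pppp})$ are localizations of polynomial rings over fields, hence \gor\ resp.\ reduced, so Proposition \ref{prop:flat ext} \ref{item:up} applies at every prime of $S$.

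The gap is in the ``only if'' direction, exactly at the point you flag, and the fix you propose does not work. To apply Proposition \ref{prop:flat ext} \ref{item:down} to $R_\pppp\to S_\qqqq$ with $\qqqq=\pppp S$ you must first know that $R_\pppp$ admits a canonical module, and there is no descent theorem of the kind you invoke: neither \cite[Theorem 3.3.14]{bh} nor Aoyama's theorem asserts that the \emph{existence} of a canonical module descends along a flat local homomorphism with \gor\ closed fiber. Indeed no such statement can be true: $R\to\widehat R$ is flat local with closed fiber a field, $\widehat R$ always admits a canonical module, yet there exist \cm\ local rings admitting none (a \cm\ local ring admits a canonical module precisely when it is a homomorphic image of a \gor\ local ring, and Ferrand--Raynaud type examples are not). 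Aoyama's descent theorem only certifies that a \emph{given} finitely generated $R$-module $K$ with $K\otimes_R S\cong\omega_S$ is a canonical module of $R$; it does not produce such a $K$. The paper circumvents this by localizing at a prime containing the variables, i.e.\ at $P=\pppp S+(X_1,\ldots,X_n)S$: then $R_\pppp\cong S_P/(X_1,\ldots,X_n)S_P$ is a homomorphic image of $S_P$, and since $S_P$ admits a canonical module it is a homomorphic image of a \gor\ ring, hence so is $R_\pppp$, which therefore admits a canonical module; moreover $S_P/\pppp S_P$ is a localization of $\kappa(\pppp)[X_1,\ldots,X_n]$ at the maximal ideal generated by the variables, hence regular and in particular \gor, so Proposition \ref{prop:flat ext} \ref{item:down} applied to $R_\pppp\to S_P$ gives that $R_\pppp$ is CTR. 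With your choice $\qqqq=\pppp S$ the closed fiber is a field, but the existence of $\omega_{R_\pppp}$ is left unproved; either give a genuine argument for descent of existence along $R_\pppp\to R[X]_{\pppp R[X]}$ or switch to the prime $P$ above.
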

\begin{proof}
First assume that $S$ is CTR.
Let $\pppp$ be an arbitrary prime ideal of $R$.
Set $P=\pppp S$.
Then $P$ is a prime ideal of $S$ and $R_\pppp=S_P/(X_1, \ldots, X_n)S_P$.
Since $S_P$ admits a canonical module, $S_P$ is a homomorphic image of a \gor\ ring.
Therefore, $R_\pppp$ is also a homomorphic image of a \gor\ ring.
Thus, $R_\pppp$ admits a canonical module.

Since $S_P/\pppp S_P=S_P/PS_P$ is a field, 
therefore 
is \gor\ and $R_\pppp\to S_P$ is a flat local homomorphism, we see by 
Proposition \ref{prop:flat ext} that $R_\pppp$ is CTR.
Since $\pppp$ is an arbitrary prime ideal of $R$, we see that $R$ is CTR.

Next we assume that that $R$ is CTR.
Let $P$ be an arbitrary prime ideal of $S$ and set $\pppp=P\cap R$.
Then $S_P/\pppp S_P=\kappa(\pppp)\otimes_R S_P$ is a localization of $\kappa(\pppp)[X_1, \ldots, X_n]$ and therefore \gor.
Further, for any $\qqqq\in \min(\trace_{R_\pppp}(\omega_{R_\pppp}))$, $\kappa(\qqqq)\otimes_{R_\pppp}S_P$ is a localization of
$\kappa(\qqqq)\otimes_{R_\pppp}S=\kappa(\qqqq)\otimes_R S=
\kappa(\qqqq)[X_1,\ldots, X_n]$ and therefore is reduced.
Thus, by Proposition \ref{prop:flat ext}, we see that $S_P$ is CTR.
Since $P$ is an arbitrary prime ideal of $S$, we see that $S$ is CTR.
\end{proof}
Next consider the CTR property under completion.

\begin{prop}
Let $(R,\mmmm)$ be a local ring with canonical module and $\widehat R$ the completion of
$R$ with respect to $\mmmm$.
Then the followings hold.
\begin{enumerate}
\item
\mylabel{item:downc}
If $\widehat R$ is CTR, then so is $R$.
\item
\mylabel{item:upc}
If $R$ is a Nagata ring (pseudo-geometric ring in Nagata's terminology) and CTR, then
$\widehat R$ is also CTR.
\end{enumerate}
\end{prop}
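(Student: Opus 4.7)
The plan is to deduce both statements from Proposition \ref{prop:flat ext} applied to the faithfully flat local homomorphism $R\to\widehat R$. Two preliminary observations are common to both parts: first, since $R$ has a canonical module $\omega_R$, the module $\omega_R\otimes_R\widehat R$ is a canonical module of $\widehat R$ by \cite[Theorem 3.3.14]{bh}, so the hypothesis of Proposition \ref{prop:flat ext} on the existence of a canonical module of the source is met; second, the closed fibre $\widehat R/\mmmm\widehat R\cong R/\mmmm$ is a field, and therefore \gor.

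For \ref{item:downc}, these two observations together with Proposition \ref{prop:flat ext}\ref{item:down} immediately yield that $R$ is CTR whenever $\widehat R$ is, with no further work required.

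For \ref{item:upc}, the remaining task is to verify the reducedness assumption in Proposition \ref{prop:flat ext}\ref{item:up}: for every $\pppp\in\min(\trace_R(\omega_R))$, the ring $\kappa(\pppp)\otimes_R\widehat R$ must be reduced. Since this ring is a localization of $\widehat R\otimes_R (R/\pppp)=\widehat R/\pppp\widehat R$, it is enough to show that $\widehat R/\pppp\widehat R$ itself is reduced. Here the Nagata hypothesis enters: as a quotient of a Nagata ring, $R/\pppp$ is a Nagata local domain, and the classical Rees-Nagata theorem asserts that any Nagata local domain is analytically unramified, i.e.\ its $\mmmm$-adic completion $\widehat{R/\pppp}=\widehat R/\pppp\widehat R$ is reduced. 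Combining this with the preliminary observations, Proposition \ref{prop:flat ext}\ref{item:up} then yields that $\widehat R$ is CTR.

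In this argument there is no serious obstacle: the core technical input is the classical Rees-Nagata theorem on analytic unramifiedness of Nagata local domains, and once that is invoked, the whole proof is a bookkeeping exercise combining it with Proposition \ref{prop:flat ext} and the standard behaviour of canonical modules under faithfully flat extensions.
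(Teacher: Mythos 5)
Your argument is correct, and part \ref{item:downc} is exactly the paper's: both apply Proposition \ref{prop:flat ext}\ref{item:down} to $R\to\widehat R$, using that the closed fibre $\widehat R/\mmmm\widehat R=R/\mmmm$ is a field. For part \ref{item:upc} you take a slightly different route. The paper does not invoke Proposition \ref{prop:flat ext}\ref{item:up} again; it argues directly on the trace ideal: since $R$ is Nagata, every $R/\pppp$ ($\pppp\in\spec(R)$) is analytically unramified by \cite[Theorem 36.4]{nag}, so radicality of $\trace_R(\omega_R)$ forces $(R/\trace_R(\omega_R))^{\wedge}$ to be reduced; combined with the flat base change $\trace_{\widehat R}(\omega_{\widehat R})=\trace_R(\omega_R)\widehat R$, this identifies $\widehat R/\trace_{\widehat R}(\omega_{\widehat R})$ with that reduced completion, so the trace of $\widehat R$ is itself radical, and one concludes with Proposition \ref{prop:local}. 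You instead feed the same Nagata--Rees input (analytic unramifiedness of the Nagata local domains $R/\pppp$) into the fibre condition of Proposition \ref{prop:flat ext}\ref{item:up}, noting that $\kappa(\pppp)\otimes_R\widehat R$ is a localization of the reduced ring $\widehat R/\pppp\widehat R=(R/\pppp)^{\wedge}$ for $\pppp\in\min(\trace_R(\omega_R))$; this is a valid verification of all hypotheses of that proposition. The trade-off is minor: your version keeps both parts uniform as applications of the flat-extension proposition, while the paper's direct computation is a bit shorter and gives the trace ideal of $\widehat R$ explicitly rather than only its radicality through the general statement. Both proofs ultimately rest on the same two facts: flat base change of the trace and Nagata's theorem on analytic unramifiedness.
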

\begin{proof}
Since $\widehat R/\mmmm \widehat R=R/\mmmm$ is a field, \ref{item:downc} follows from
Proposition \ref{prop:flat ext}.
For \ref{item:upc}, note that for any $\pppp\in\spec(R)$, $R/\pppp$ is analytically unramified,
i.e. $\widehat{(R/\pppp)}$ is reduced.
See \cite[Theorem 36.4]{nag}.
Therefore, $(R/\trace_R(\omega_R))\widehat{\ }$ is reduced, since $\trace_R(\omega_R)$ is a radical ideal.
Since $\widehat R/\trace_{\widehat R}(\omega_{\widehat R})=\widehat R/\trace_R(\omega_R)\widehat R
=(R/\trace_R(\omega_R))\widehat{\ }$, we see that $\trace_{\widehat R}(\omega_{\widehat R})$ is a radical ideal of
$\widehat R$.
\end{proof}

Next we consider the CTR property under the quotient of an ideal generated by  a regular sequence.

\begin{prop}
Let $R$ be a local ring with canonical module or an $\NNN$-graded algebra over a field with
(irrelevant) maximal ideal $\mmmm$.
Suppose that $x_1$, \ldots, $x_r\in\mmmm$ is a (homogeneous) regular sequence with
$x_1$, \ldots, $x_r\in\trace_R(\omega_R)$.
Set $\overline R=R/(x_1, \ldots, x_r)R$.
Then $\trace_{\overline R}(\omega_{\overline R})=\trace_R(\omega_R)/(x_1,\ldots, x_r)R$.
In particular, $R$ is CTR if and only if so is $\overline R$.
\end{prop}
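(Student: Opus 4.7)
The plan is to first establish the main identity $\trace_{\overline R}(\omega_{\overline R}) = \trace_R(\omega_R)/(x_1,\ldots,x_r)R$ and then deduce the CTR equivalence as a short corollary.  For the corollary: the hypothesis $(x_1,\ldots,x_r)\subseteq\trace_R(\omega_R)$ together with the third isomorphism theorem yields a canonical ring isomorphism
$$R/\trace_R(\omega_R) \cong \overline R/\trace_{\overline R}(\omega_{\overline R}),$$
so one of the trace ideals is radical if and only if the other is; by Propositions~\ref{prop:local} and~\ref{prop:graded} this is exactly the CTR equivalence.

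For the main identity, the starting input is that $\omega_R$ is maximal \cm, so $x_1,\ldots,x_r$ is also $\omega_R$-regular, giving the standard formula $\omega_{\overline R} = \omega_R/(x_1,\ldots,x_r)\omega_R$ (see \cite[Corollary~3.3.15]{bh}, or its graded analogue).  The inclusion $\trace_R(\omega_R)/(x_1,\ldots,x_r)R \subseteq \trace_{\overline R}(\omega_{\overline R})$ is then immediate: any $\varphi\in\omega_R^{-1}$ descends to $\bar\varphi\in\hom_{\overline R}(\omega_{\overline R},\overline R)$ via $\bar\varphi(\bar m)=\overline{\varphi(m)}$, which shows that the image of $\trace_R(\omega_R)$ in $\overline R$ is contained in $\trace_{\overline R}(\omega_{\overline R})$; because $(x_1,\ldots,x_r)\subseteq\trace_R(\omega_R)$, this image equals $\trace_R(\omega_R)/(x_1,\ldots,x_r)R$.

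The reverse inclusion $\trace_{\overline R}(\omega_{\overline R})\subseteq\trace_R(\omega_R)/(x_1,\ldots,x_r)R$ is the heart of the argument.  The naive strategy of lifting every $\bar\psi\in\hom_{\overline R}(\omega_{\overline R},\overline R)$ to some $\varphi\in\omega_R^{-1}$ need not succeed: already for $r=1$, $x=x_1$, the long exact sequence obtained by applying $\hom_R(\omega_R,-)$ to $0\to R\xrightarrow{x} R\to\overline R\to 0$ and then using the adjunction $\hom_R(\omega_R,\overline R)=\hom_{\overline R}(\omega_{\overline R},\overline R)$ gives
$$0\to \omega_R^{-1}/x\omega_R^{-1}\to \hom_{\overline R}(\omega_{\overline R},\overline R)\to (0:_{\ext^1_R(\omega_R,R)} x)\to 0,$$
whose cokernel can be nonzero.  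To bypass this, I would use Lemma~\ref{lem:hhs 3.2}: write $R=S/J$ for a (graded) \gor\ ring $S$, pick a finite free resolution $0\to F_h\xrightarrow{\varphi_h}\cdots\to F_0\to R\to 0$ and a map $\psi\colon G\to F_h\otimes R$ with $\trace_R(\omega_R)=I_1(\psi)$.  Lifting each $x_i$ to $\tilde x_i\in S$ and using these together with $F_\bullet$, I would construct an $S$-free resolution of $\overline R$ parallel to that of $R$, together with a matrix $\bar\psi$ whose entries are the reductions modulo $(x_1,\ldots,x_r)$ of the entries of $\psi$.  Applying Lemma~\ref{lem:hhs 3.2} to $\overline R$ would then give $\trace_{\overline R}(\omega_{\overline R})=I_1(\bar\psi)=I_1(\psi)\overline R = \trace_R(\omega_R)\overline R$, as required.

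The main technical obstacle is the explicit compatibility of the two resolutions and the verification that the last-step matrix for $\overline R$ may be chosen to be the reduction of $\psi$; the subtlety is that $\tilde x_1,\ldots,\tilde x_r$ need not be $S$-regular, so one cannot simply tensor with a Koszul complex and instead a mapping-cone or iterated-extension construction is required.  The hypothesis $x_i\in\trace_R(\omega_R)=I_1(\psi)$ is precisely what guarantees that these reduced entries continue to generate the trace of $\omega_{\overline R}$.  Once the identity is proved, the CTR equivalence follows from the first paragraph.
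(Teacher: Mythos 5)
Your opening paragraph (the easy inclusion via descending homomorphisms, and the deduction of the CTR equivalence from Propositions \ref{prop:local} and \ref{prop:graded}) is fine and agrees with the paper. The gap is in the reverse inclusion. Unwind your construction: the total complex of $F_\bullet\otimes_S K(\tilde x_1,\ldots,\tilde x_r;S)$ is automatically an $S$-free resolution of $\overline R$ of length $h+r$ (acyclicity only needs $x_1,\ldots,x_r$ to be an $R$-regular sequence, so the worry about $S$-regularity of the lifts is not the issue), and since the $x_i$ become $0$ in $\overline R$, the last differential reduces to $(\varphi_h\otimes 1_{\overline R},0)$. Hence taking $\bar\psi=\psi\otimes\overline R$ satisfies the exactness hypothesis of Lemma \ref{lem:hhs 3.2} for $\overline R$ if and only if $G\otimes\overline R\to F_h\otimes\overline R\to F_{h-1}\otimes\overline R$ is exact, i.e.\ if and only if $\mathrm{Tor}_1^R(\coker(\varphi_h\otimes R),\overline R)=0$. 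This Tor module is canonically the cokernel of the natural map $\omega_R^{-1}\otimes_R\overline R\to\hom_{\overline R}(\omega_{\overline R},\overline R)$, i.e.\ (for $r=1$) exactly the obstruction $(0:_{\ext^1_R(\omega_R,R)}x_1)$ in the long exact sequence you yourself displayed in order to reject the naive lifting strategy. Worse, the hypothesis $x_1\in\trace_R(\omega_R)$ does not remove this obstruction: by \cite[Theorem 2.3]{dkt} applied to the maximal \cm\ module $\omega_R$, it forces $x_1\ext^1_R(\omega_R,R)=0$, so the obstruction is all of $\ext^1_R(\omega_R,R)$, which is nonzero for typical non-\gor\ \cm\ rings. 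Thus your resolution argument is equivalent to the naive lifting you correctly dismissed, and the sentence asserting that $x_i\in I_1(\psi)$ ``guarantees that the reduced entries continue to generate the trace'' is precisely the unproved content of the proposition, not a consequence of the setup.

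The paper closes this gap by a different mechanism, namely the annihilator characterization of the canonical trace from \cite[Theorem 2.3]{dkt}: since $x'\in\trace_R(\omega_R)$ annihilates $\ext^1_R(M,R)$ for every maximal \cm\ $R$-module $M$, the long exact sequence obtained from $0\to M\stackrel{x'}{\to}M\to M/x'M\to0$ together with $\ext^2_R(M/x'M,R)\cong\ext^1_{R/x'R}(M/x'M,R/x'R)$ gives $\ann_R(\ext^1_R(M,R))\supseteq\ann_R(\ext^1_{R/x'R}(M/x'M,R/x'R))$, and running over all maximal \cm\ modules this yields $\trace_{R/x'R}(\omega_{R/x'R})\subseteq\trace_R(\omega_R)(R/x'R)$; iterating over $x_1,\ldots,x_r$ gives the reverse inclusion (the graded case is reduced to the local one by localizing at $\mmmm$). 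To salvage your approach you would have to show directly that the images of the non-liftable elements of $\hom_{\overline R}(\omega_{\overline R},\overline R)$ still land in $\trace_R(\omega_R)\overline R$, and that is essentially the DKT input you would be re-proving.
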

\begin{proof}
Localizing by $\mmmm$, we can reduce the $\NNN$-graded case to the local case.

First note that $\omega_{\overline R}=\omega_R/(x_1, \ldots, x_r)\omega_R=\omega_R\otimes_R \overline R$.
See \cite[Theorem 3.3.5]{bh}.
By \cite[Lemma 1.5 (ii)]{hhs}, we see that 
$$
\trace_R(\omega_R)\overline R\subset\trace_{\overline R}(\omega_R\otimes \overline R)
=\trace_{\overline R}(\omega_{\overline R}).
$$
On the other hand, let $M$ be an arbitrary maximal \cm\ $R$-module and $x'$ an $R$-regular element 
with $x'\in\trace_R(\omega_R)$.
Then by \cite[Theorem 2.3]{dkt}, we see that $x'\ext_R^1(M,R)=0$.
By considering the long exact sequence induced by
$0\to M\stackrel{x'}{\to}M\to M/x'M\to0$,
we get the following exact sequence.
$$
\ext_R^1(M,R)\stackrel{x'}{\to}\ext_R^1(M,R)\to\ext_R^2(M/x'M, R).
$$
Since $x'\ext_R^1(M,R)=0$ and $\ext_R^2(M/x'M,R)\cong\ext_{R/x'R}^1(M/x'M,R/x'R)$
(see, e.g.\ \cite[Lemma 3.1.16]{bh}), we see that
$\ann_R(\ext_R^1(M,R))\supset\ann_R(\ext_{R/x'R}^1(M/x'M, R/x'R))$.
Therefore, by \cite[Theorem 2.3]{dkt}, we see that
$
\trace_R(\omega_R)(R/x'R)\supset\trace_{R/x'R}(\omega_{R/x'R})
$.
Using this fact repeatedly, we see that
$$
\trace_{ R}(\omega_R)\overline R\supset \trace_{\overline R}(\omega_{\overline R}).
$$

Therefore,
$$
\trace_{\overline R}(\omega_{\overline R})=\trace_R(\omega_R)\overline R
=\trace_R(\omega_R)/(x_1, \ldots, x_r)R.
$$
In particular, $\trace_R(\omega_R)$ is a radical ideal if and only if so is $\trace_{\overline R}(\omega_{\overline R})$.
\end{proof}

Next, we consider the behavior of CTR property under the tensor product.

\begin{prop}
\mylabel{prop:tensor}
Let $R^{(1)}$ and $R^{(2)}$ be $\NNN$-graded $\KKK$-algebras, where $\KKK$ is a field and set
$R=R^{(1)}\otimes_\KKK R^{(2)}$.
\begin{enumerate}
\item
\mylabel{item:tensor factor ctr}
If $R$ is CTR and $\trace_{R^{(2)}}(\omega_{R^{(2)}})$ contains an $R^{(2)}$-regular element, then
$R^{(1)}$ is CTR.
\item
\mylabel{item:tensor ctr}
If $R^{(i)}$ is \cm\ and $(R^{(i)}/\trace_{R^{(i)}}(\omega_{R^{(i)}}))\otimes_\KKK R^{(3-i)}$ is reduced for $i=1, 2$, then
$R$ is CTR.
\end{enumerate}
\end{prop}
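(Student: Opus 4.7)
The backbone of the argument is the tensor factorization of the trace. Standard facts for CM graded $\KKK$-algebras give $\omega_R = \omega_{R^{(1)}}\otimes_\KKK \omega_{R^{(2)}}$, and because all modules involved are finitely generated over Noetherian rings, Hom distributes over $\otimes_\KKK$:
$$\hom_R(\omega_R,R) \;=\; \hom_{R^{(1)}}(\omega_{R^{(1)}},R^{(1)})\otimes_\KKK \hom_{R^{(2)}}(\omega_{R^{(2)}},R^{(2)}).$$
Writing $I_i\define\trace_{R^{(i)}}(\omega_{R^{(i)}})$, the image description in Lemma \ref{lem:tr basic} yields
$$\trace_R(\omega_R) \;=\; I_1\otimes_\KKK I_2 \;=\; (I_1 R)(I_2 R) \;=\; (I_1 R)\cap (I_2 R),$$
where the last equality is the elementary $\KKK$-basis computation obtained by extending $\KKK$-bases of $I_i$ to $\KKK$-bases of $R^{(i)}$. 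Note also that $R/I_i R \cong (R^{(i)}/I_i)\otimes_\KKK R^{(3-i)}$.

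For \ref{item:tensor ctr}, since both $R^{(i)}$ are CM, $R=R^{(1)}\otimes_\KKK R^{(2)}$ is CM (tensor product over a field). The hypothesis says $R/I_i R \cong (R^{(i)}/I_i)\otimes_\KKK R^{(3-i)}$ is reduced, hence $I_i R$ is radical for $i=1,2$. An intersection of radical ideals is radical, so $\trace_R(\omega_R)=I_1 R\cap I_2 R$ is radical, and Proposition \ref{prop:graded} gives that $R$ is CTR.

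For \ref{item:tensor factor ctr}, I first have to establish that $R^{(1)}$ is CM (so that $\omega_{R^{(1)}}$ is defined). The hypothesis $\trace_{R^{(2)}}(\omega_{R^{(2)}})\neq 0$ forces $R^{(2)}$ to be CM. The map $R^{(1)}\to R$ is faithfully flat, and for any $\pppp\in\spec R^{(1)}$ the fiber $\kappa(\pppp)\otimes_\KKK R^{(2)}$ is CM (base change of a CM ring by a field extension). Flat descent of the CM property (additivity of depth and dimension in a faithfully flat local map with CM fibers) then forces $R^{(1)}$ CM. Now suppose $a\in R^{(1)}$ with $a^n\in I_1$, and let $b\in I_2$ be $R^{(2)}$-regular. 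Then $b^n\in I_2$ and
$$(a\otimes b)^n \;=\; a^n\otimes b^n \;\in\; I_1\otimes_\KKK I_2 \;=\; \trace_R(\omega_R),$$
so the radical hypothesis on $R$ gives $a\otimes b\in \trace_R(\omega_R)\subset I_1 R$. Passing to the quotient $R/I_1 R = (R^{(1)}/I_1)\otimes_\KKK R^{(2)}$, the image $\bar a\otimes b$ vanishes in a tensor product of $\KKK$-vector spaces; since $b\neq 0$ extends to a $\KKK$-basis of $R^{(2)}$, this forces $\bar a=0$, i.e.\ $a\in I_1$. Hence $I_1$ is radical, and Proposition \ref{prop:graded} gives that $R^{(1)}$ is CTR.

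The central technical point is the identification $\trace_R(\omega_R)=I_1 R\cap I_2 R$, which makes both parts nearly immediate once established. The principal obstacle is in \ref{item:tensor factor ctr}: descending the CM property to $R^{(1)}$ via the faithfully flat fiber argument, and then converting the radical condition on $\trace_R(\omega_R)$ into one on $I_1$ by exploiting the regular element $b\in I_2$ together with the fact that $\bar a\otimes b=0$ in a $\KKK$-vector-space tensor product forces $\bar a=0$.
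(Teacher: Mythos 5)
Your argument rests on the same key identity as the paper's proof, namely $\trace_R(\omega_R)=\trace_{R^{(1)}}(\omega_{R^{(1)}})R\cap\trace_{R^{(2)}}(\omega_{R^{(2)}})R$; the paper simply imports this from \cite[Proposition 4.1 and Theorem 4.2]{hhs}, whereas you re-derive it from $\omega_R\cong\omega_{R^{(1)}}\otimes_\KKK\omega_{R^{(2)}}$ together with the Hom--tensor factorization (both of these deserve a citation or a short argument, and both presuppose that $R^{(1)}$ and $R^{(2)}$ are \cm). Your part \ref{item:tensor ctr} then coincides with the paper's argument verbatim. In part \ref{item:tensor factor ctr} you take a genuinely different and in fact more elementary route: the paper shows $1\otimes a$ is a nonzerodivisor on $R/\trace_{R^{(1)}}(\omega_{R^{(1)}})R$, recovers $\trace_{R^{(1)}}(\omega_{R^{(1)}})R$ as the contraction of the trace after inverting $1\otimes a$, and then contracts along the faithfully flat map $R^{(1)}\to R$; you argue directly on pure tensors: $(a\otimes b)^n\in\trace_R(\omega_R)$, radicality gives $a\otimes b\in\trace_{R^{(1)}}(\omega_{R^{(1)}})R$, and $\bar a\otimes b=0$ with $b\neq 0$ in a tensor product of $\KKK$-vector spaces forces $\bar a=0$. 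This works, and it only uses $\trace_{R^{(2)}}(\omega_{R^{(2)}})\neq 0$ rather than the full regular-element hypothesis, so it even yields a marginally stronger statement.

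There is, however, one step that is false as stated: ``$\trace_{R^{(2)}}(\omega_{R^{(2)}})\neq 0$ forces $R^{(2)}$ to be \cm.'' For $R^{(2)}=\KKK[x,y]/(x^2,xy)$ one has $\omega_{R^{(2)}}\cong\KKK[y]$ up to a degree shift, and $1\mapsto y$ defines a nonzero element of $\hom_{R^{(2)}}(\omega_{R^{(2)}},R^{(2)})$, so the trace is nonzero (it is in fact the irrelevant maximal ideal) although the ring is not \cm. Since the \cm\ property of both factors is exactly what legitimizes $\omega_R\cong\omega_{R^{(1)}}\otimes_\KKK\omega_{R^{(2)}}$ and hence your trace factorization, you need to obtain it correctly: $R$ is CTR, hence \cm\ by definition, and the \cm\ property descends along the faithfully flat inclusions $R^{(i)}\to R$ (equivalently, cite \cite[Theorem 23.3 Corollary]{mat} as the paper does: $R$ is \cm\ if and only if both $R^{(i)}$ are). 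With that replacement your fiber-wise descent argument for $R^{(1)}$ becomes unnecessary, and the rest of your proof goes through.
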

\begin{proof}
First note that by \cite[Proposition 4.1 and Theorem 4.2]{hhs}, it holds that
$\trace_R(\omega_R)=\trace_{R^{(1)}}(\omega_{R^{(1)}})R\cap\trace_{R^{(2)}}(\omega_{R^{(2)}})R$
and by \cite[Theorem 23.3 Corollary]{mat}, $R$ is \cm\ if and only if both $R^{(1)}$ and $R^{(2)}$ are \cm.

We first prove \ref{item:tensor factor ctr}.
Take an $R^{(2)}$-regular element $a$ from $\trace_{R^{(2)}}(\omega_{R^{(2)}})$.
Since
$$
0\to R^{(2)}\stackrel{a}{\to} R^{(2)}
$$
is exact, we see that
$$
0\to R^{(1)}/\trace_{R^{(1)}}(\omega_{R^{(1)}})\otimes R^{(2)}
\stackrel{1\otimes a}{\to} R^{(1)}/\trace_{R^{(1)}}(\omega_{R^{(1)}})\otimes R^{(2)}
$$
is also exact.
Since $R^{(1)}/\trace_{R^{(1)}}(\omega_{R^{(1)}})\otimes R^{(2)}=R/\trace_{R^{(1)}}(\omega_{R^{(1)}})R$,
we see that $1\otimes a\in R$ is an $R/\trace_{R^{(1)}}(\omega_{R^{(1)}})R$-regular element of $R$.
On the other hand, since $1\otimes a\in\trace_{R^{(2)}}(\omega_{R^{(2)}})R$, we see that
\begin{eqnarray*}
&&\trace_R(\omega_R)R[(1\otimes a)^{-1}]\cap R\\
&=&(\trace_{R^{(1)}}(\omega_{R^{(1)}})R\cap\trace_{R^{(2)}}(\omega_{R^{(2)}})R)R[(1\otimes a)^{-1}]\cap R\\
&=&\trace_{R^{(1)}}(\omega_{R^{(1)}})R.
\end{eqnarray*}
Thus, $\trace_{R^{(1)}}(\omega_{R^{(1)}})R$ is a radical ideal since $R$ is CTR.

Since $R^{(1)}\to R$, $x\mapsto x\otimes 1$ is  a faithfully flat homomorphism,
we see that $\trace_{R^{(1)}}(\omega_{R^{(1)}})=\trace_{R^{(1)}}(\omega_{R^{(1)}})R\cap R^{(1)}$
and therefore $\trace_{R^{(1)}}(\omega_{R^{(1)}})$ is a radical ideal of $R^{(1)}$.

Next, we prove \ref{item:tensor ctr}.
Since $\trace_R(\omega_R)=\trace_{R^{(1)}}(\omega_{R^{(1)}})R\cap
\trace_{R^{(2)}}(\omega_{R^{(2)}})R$,
it is enough to show that $R/\trace_{R^{(i)}}(\omega_{R^{(i)}})R$ is a reduced ring for $i=1, 2$.
However,
$$
R/\trace_{R^{(i)}}(\omega_{R^{(i)}})R=R^{(i)}/\trace_{R^{(i)}}(\omega_{R^{(i)}})\otimes R^{(3-i)}
$$
and the right hand side is assumed to be reduced, we see the result.
\end{proof}
\begin{remark}
\rm
In the situation of Proposition \ref{prop:tensor} \ref{item:tensor factor ctr}, if $R^{(2)}$ is reduced,
then for any $\pppp\in\ass(R^{(2)})$, $R^{(2)}_\pppp$ is a field and therefore \gor.
Thus $\trace_{R^{(2)}}(\omega_{R^{(2)}})\not\subset\pppp$.
Since $\pppp$ is an arbitrary associated prime of $R^{(2)}$, we see that $\trace_{R^{(2)}}(\omega_{R^{(2)}})$
contains an $R^{(2)}$-regular element.
On the other hand, if the assumption of Proposition \ref{prop:tensor} \ref{item:tensor ctr} is satisfied,
then $R^{(1)}$ and $R^{(2)}$ are reduced CTR rings.
Conversely, if $R^{(1)}$ and $R^{(2)}$ are reduced CTR and $\KKK$ is a perfect field, then
$R^{(i)}/\trace_{R^{(i)}}(\omega_{R^{(i)}})\otimes_\KKK R^{(3-i)}$ is reduced for $i=1, 2$ by
Lemma \ref{lem:tensor red} below.
\end{remark}
By the above remark, we see the following.

\begin{cor}
Let $\KKK$ be a field and let $R^{(1)}$ and $R^{(2)}$ be reduced $\NNN$-graded $\KKK$-algebras.
Then the followings hold.
\begin{enumerate}
\item
If $R^{(1)}\otimes R^{(2)}$ is CTR, then both $R^{(1)}$ and $R^{(2)}$ are CTR.
\item
If $\KKK$ is a perfect field and $R^{(1)}$ and $R^{(2)}$ are CTR, then $R^{(1)}\otimes R^{(2)}$
is CTR.
\end{enumerate}
\end{cor}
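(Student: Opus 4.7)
The plan is to derive both parts directly from Proposition~\ref{prop:tensor} together with the two observations recorded in the preceding remark. Nothing new has to be built; the whole content of the corollary is that the technical hypotheses in Proposition~\ref{prop:tensor} are automatic once we know the factors are reduced (and, for (2), that $\KKK$ is perfect).

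For part (1), first note that CTR includes being \cm, so from $R^{(1)}\otimes_\KKK R^{(2)}$ being CTR one gets that it is \cm. By \cite[Theorem 23.3 Corollary]{mat} (already invoked in the proof of Proposition~\ref{prop:tensor}), this forces each $R^{(i)}$ to be \cm, so each $\omega_{R^{(i)}}$ exists. Now, since $R^{(2)}$ is reduced, the first half of the remark above provides an $R^{(2)}$-regular element in $\trace_{R^{(2)}}(\omega_{R^{(2)}})$; this is exactly the hypothesis of Proposition~\ref{prop:tensor}~\ref{item:tensor factor ctr}, so $R^{(1)}$ is CTR. Swapping the roles of $R^{(1)}$ and $R^{(2)}$ and applying the same reasoning (using that $R^{(1)}$ is reduced) yields that $R^{(2)}$ is CTR.

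For part (2), assume $R^{(1)}$ and $R^{(2)}$ are both reduced CTR rings and that $\KKK$ is perfect. Then each $R^{(i)}$ is \cm\ by definition. The second half of the remark (which invokes Lemma~\ref{lem:tensor red}) tells us that under these hypotheses
$$(R^{(i)}/\trace_{R^{(i)}}(\omega_{R^{(i)}}))\otimes_\KKK R^{(3-i)}$$
is reduced for $i=1,2$. This is precisely the hypothesis of Proposition~\ref{prop:tensor}~\ref{item:tensor ctr}, which then gives that $R^{(1)}\otimes_\KKK R^{(2)}$ is CTR.

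There is no real obstacle: the corollary is a bookkeeping consequence of Proposition~\ref{prop:tensor} once the remark is in hand. The only delicate point is making sure one does not need Lemma~\ref{lem:tensor red} in part (1); but there the existence of a regular element in $\trace_{R^{(2)}}(\omega_{R^{(2)}})$ comes from the elementary argument in the remark (every associated prime of a reduced ring is minimal, hence $R^{(2)}_\pppp$ is a field and therefore \gor, so $\trace_{R^{(2)}}(\omega_{R^{(2)}})\not\subset\pppp$), and perfectness of $\KKK$ is not required.
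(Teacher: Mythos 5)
Your argument is correct and is exactly the paper's route: the corollary is deduced from Proposition \ref{prop:tensor}, with the preceding remark supplying the regular element in $\trace_{R^{(2)}}(\omega_{R^{(2)}})$ for part (1) and, via Lemma \ref{lem:tensor red}, the reducedness hypotheses for part (2). Your extra observation that CM-ness of the factors follows from CM-ness of the tensor product is a harmless explicit filling-in of what the paper leaves implicit.
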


Next we consider the behavior of CTR property under the Segre product.
First, we state the following fact which is a direct consequence of \cite[Theorem 26.3]{mat}.

\begin{lemma}
\mylabel{lem:tensor red}
Let $\KKK$ be a perfect field and let $R^{(1)}$ and $R^{(2)}$ be reduced $\KKK$-algebras.
Then $R^{(1)}\otimes_\KKK R^{(2)}$ is a reduced ring.
\end{lemma}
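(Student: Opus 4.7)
The cited reference, \cite[Theorem 26.3]{mat}, handles the special case where the second factor is a field extension of $\KKK$: since $\KKK$ is perfect, every extension $L/\KKK$ is separable, hence $A\otimes_\KKK L$ is reduced for every reduced $\KKK$-algebra $A$. My plan is to bootstrap from this field case to a general reduced second factor $R^{(2)}$ by embedding $R^{(2)}$ into a finite product of its residue fields at minimal primes and then tensoring.

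Since $R^{(2)}$ is a reduced Noetherian $\KKK$-algebra (Noetherianness being a standing hypothesis from \S\ref{sec:pre}), it has only finitely many minimal primes $\pppp_1,\ldots,\pppp_n$, and the natural composite
$$
R^{(2)}\hookrightarrow \prod_{i=1}^n R^{(2)}/\pppp_i \hookrightarrow \prod_{i=1}^n \kappa(\pppp_i)
$$
is injective, with each $\kappa(\pppp_i)$ an extension field of $\KKK$. Because $\KKK$ is a field, $R^{(1)}$ is $\KKK$-flat, so applying $R^{(1)}\otimes_\KKK -$ preserves this injection; since $\otimes_\KKK$ distributes over finite direct products, we obtain
$$
R^{(1)}\otimes_\KKK R^{(2)}\hookrightarrow \prod_{i=1}^n (R^{(1)}\otimes_\KKK \kappa(\pppp_i)).
$$
By the field case of \cite[Theorem 26.3]{mat} (applied with $A=R^{(1)}$ and $L=\kappa(\pppp_i)$), every factor on the right is reduced, so the finite product is reduced, and therefore so is its subring $R^{(1)}\otimes_\KKK R^{(2)}$.

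I expect no serious obstacle. The only points needing care are the $\KKK$-flatness of $R^{(1)}$, which guarantees that tensoring preserves the injection, and the compatibility of $\otimes_\KKK$ with finite products; both are standard. All substantive content is carried by the cited separability-implies-reducedness result.
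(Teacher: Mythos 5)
Your proposal is correct. Note, however, that the paper does not really give a proof at all: it simply records the lemma as a direct consequence of \cite[Theorem 26.3]{mat}, whose statement (separability of one factor plus reducedness of the other implies reducedness of the tensor product, combined with the fact that over a perfect field reduced algebras are separable) already covers arbitrary reduced $\KKK$-algebras, not only field extensions of $\KKK$. What you do differently is to use only the field-extension case of the citation and then bootstrap: embed the reduced Noetherian algebra $R^{(2)}$ into $\prod_{i}\kappa(\pppp_i)$ over its finitely many minimal primes, use flatness of $R^{(1)}$ over the field $\KKK$ to preserve the injection, and use compatibility of $\otimes_\KKK$ with finite products to land $R^{(1)}\otimes_\KKK R^{(2)}$ inside a finite product of reduced rings. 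This is a sound and essentially self-contained reduction (your use of the standing Noetherian hypothesis for finiteness of the minimal primes, and the identification $\kappa(\pppp_i)=\mathrm{Frac}(R^{(2)}/\pppp_i)$ for a minimal prime of a reduced ring, are both correct), at the cost of being longer than the paper's one-line citation; what it buys is that you only need the geometric-reducedness statement for field extensions of a perfect field, rather than the stronger form of the separability theorem.
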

Next we state the following.

\begin{lemma}
\mylabel{lem:segre rad}
Let $\KKK$ be a perfect field and let $R^{(1)}$, \ldots, $R^{(n)}$ be $\NNN$-graded reduced
$\KKK$-algebras and $I_i$ a graded radical ideal of $R^{(i)}$ for $1\leq i\leq n$.
Then $I_1\#\cdots \# I_n$ is a radical ideal of $R^{(1)}\#\cdots\# R^{(n)}$.
\end{lemma}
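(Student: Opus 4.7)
The plan is to reduce the statement to the reducedness of a tensor product of reduced $\KKK$-algebras over the perfect field $\KKK$, so that Lemma \ref{lem:tensor red} can be applied.

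First I will spell out what the ideal $I_1\#\cdots\#I_n$ is. In each degree $m$, the short exact sequences $0\to I_i\to R^{(i)}\to R^{(i)}/I_i\to 0$ tensor over the field $\KKK$ to give surjections $R^{(1)}_m\otimes\cdots\otimes R^{(n)}_m\twoheadrightarrow (R^{(1)}/I_1)_m\otimes\cdots\otimes(R^{(n)}/I_n)_m$. Taking the direct sum over $m$ yields a surjective graded $\KKK$-algebra homomorphism
\[
\pi\colon R^{(1)}\#\cdots\#R^{(n)}\longrightarrow (R^{(1)}/I_1)\#\cdots\#(R^{(n)}/I_n),
\]
whose kernel is exactly $I_1\#\cdots\#I_n$; this simultaneously confirms that $I_1\#\cdots\#I_n$ is an ideal and identifies
\[
\bigl(R^{(1)}\#\cdots\#R^{(n)}\bigr)\big/\bigl(I_1\#\cdots\#I_n\bigr)\;\cong\;(R^{(1)}/I_1)\#\cdots\#(R^{(n)}/I_n).
\]

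Next, each $A^{(i)}\define R^{(i)}/I_i$ is reduced because $I_i$ is radical. By definition $A^{(1)}\#\cdots\#A^{(n)}=\bigoplus_m A^{(1)}_m\otimes\cdots\otimes A^{(n)}_m$ sits as a graded $\KKK$-subalgebra of the ordinary tensor product $A^{(1)}\otimes_\KKK\cdots\otimes_\KKK A^{(n)}$ equipped with the total grading. A routine induction on $n$ built on Lemma \ref{lem:tensor red} shows that a finite tensor product of reduced $\KKK$-algebras over a perfect field is reduced. Therefore the Segre product $A^{(1)}\#\cdots\#A^{(n)}$, as a subring of a reduced ring, is itself reduced.

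Combining the two steps, the quotient of $R^{(1)}\#\cdots\#R^{(n)}$ by $I_1\#\cdots\#I_n$ is reduced, which is exactly the assertion that $I_1\#\cdots\#I_n$ is a radical ideal. The only place where care is needed is the first step — identifying $I_1\#\cdots\#I_n$ with $\ker\pi$ and checking that passing to the quotient commutes with forming the Segre product — but this is immediate from the exactness of tensor product over the field $\KKK$ in each graded component, so no serious obstacle is anticipated. The hypothesis that the $R^{(i)}$ themselves (and not merely the $R^{(i)}/I_i$) are reduced is not actually used in this approach, which is a mild consistency check on the formulation.
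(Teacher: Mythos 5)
Your first step contains a genuine error: the kernel of $\pi\colon R^{(1)}\#\cdots\#R^{(n)}\to (R^{(1)}/I_1)\#\cdots\#(R^{(n)}/I_n)$ is \emph{not} $I_1\#\cdots\#I_n$. In each degree $m$ the kernel of $V_1\otimes\cdots\otimes V_n\to (V_1/W_1)\otimes\cdots\otimes (V_n/W_n)$ is $\sum_i V_1\otimes\cdots\otimes W_i\otimes\cdots\otimes V_n$, not $W_1\otimes\cdots\otimes W_n$; so $\ker\pi=\sum_i R^{(1)}\#\cdots\#I_i\#\cdots\#R^{(n)}$, which strictly contains $I_1\#\cdots\#I_n$ in general. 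Concretely, take $n=2$, $R^{(1)}=R^{(2)}=\KKK[x,y]$, $I_1=(x)$, $I_2=(y)$: in degree $1$ the space $(I_1\#I_2)_1$ is spanned by $x\otimes y$ alone, while $(\ker\pi)_1$ is spanned by $x\otimes x$, $x\otimes y$, $y\otimes y$. Hence your identification of the quotient $(R^{(1)}\#\cdots\#R^{(n)})/(I_1\#\cdots\#I_n)$ with $(R^{(1)}/I_1)\#\cdots\#(R^{(n)}/I_n)$ fails, and with it the whole reduction to reducedness of that Segre product. Your closing remark that the reducedness of the $R^{(i)}$ is never used is in fact a symptom of this error rather than a consistency check.

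The correct route (the one the paper takes, for $n=2$ and then induction) is to quotient out one factor at a time: tensoring the exact sequence $0\to I_1\to R^{(1)}\to R^{(1)}/I_1\to 0$ degreewise with $R^{(2)}_m$ \emph{is} exact, so $(R^{(1)}\#R^{(2)})/(I_1\#R^{(2)})\cong (R^{(1)}/I_1)\#R^{(2)}$, which is a subring of $(R^{(1)}/I_1)\otimes_\KKK R^{(2)}$ and hence reduced by Lemma \ref{lem:tensor red} --- and here the reducedness of $R^{(2)}$ is genuinely needed. Thus $I_1\#R^{(2)}$ is radical, and symmetrically so is $R^{(1)}\#I_2$. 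Since inside $R^{(1)}\#R^{(2)}$ one has $I_1\#I_2=(I_1\#R^{(2)})\cap(R^{(1)}\#I_2)$ (in each degree $(W_1\otimes V_2)\cap(V_1\otimes W_2)=W_1\otimes W_2$), the ideal $I_1\#I_2$ is an intersection of radical ideals and hence radical. Your proposal as written does not establish the lemma; replacing the single surjection $\pi$ by this intersection argument repairs it.
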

\begin{proof}
Since $R^{(1)}\#R^{(2)}$is a subring of $R^{(1)}\otimes_\KKK R^{(2)}$, we see that
$R^{(1)}\# R^{(2)}$ is a reduced ring by the previous lemma.
Therefore, by induction on $n$, it is enough to prove the case where $n=2$.

Since 
$$0\to I_1\to R^{(1)}\to R^{(1)}/I_1\to 0$$
is exact, we see that 
$$
0\to I_1\# R^{(2)}\to R^{(1)}\# R^{(2)}\to (R^{(1)}/I_1)\# R^{(2)}\to 0
$$
is exact.
Since $(R^{(1)}/I_1)\# R^{(2)}$ is a subring of $(R^{(1)}/I_1)\otimes R_2$ and $(R^{(1)}/I_1)\otimes R_2$
is reduced by Lemma \ref{lem:tensor red}, we see that $(R^{(1)}/I_1)\# R^{(2)}$ is a reduced ring.
Therefore, $I_1\#R^{(2)}$ is a radical ideal of $R^{(1)}\# R^{(2)}$.
We see by the same way that $R^{(1)}\#I_2$ is a radical ideal of $R^{(1)}\# R^{(2)}$.

Since $I_1\#I_2=(I_1\#R^{(2)})\cap(R^{(1)}\#I_2)$, we see that $I_1\#I_2$ is a radical
ideal of $R^{(1)}\# R^{(2)}$.
\end{proof}
Next we note a basic fact about \nzd\ and Segre product.

\begin{lemma}
\mylabel{lem:nzd}
Let $\KKK$ be a field, $R^{(1)}$ and $R^{(2)}$ $\NNN$-graded $\KKK$-algebras, 
$d$ a positive integer and $x_i\in R^{(i)}_d$ a \nzd\ of $R^{(i)}$ for $i=1,2$.
Then $x_1\# x_2$ is a \nzd\ of $R^{(1)}\# R^{(2)}$.
\end{lemma}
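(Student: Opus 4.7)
The plan is to exploit the bigraded/homogeneous structure: reduce to the case where $z\in R^{(1)}\# R^{(2)}$ is homogeneous, then pick a nice basis-like expansion and use the nzd hypotheses on $x_1$ and $x_2$ together with the fact that tensor product over a field preserves linear independence.

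First, I would take $z\in R^{(1)}\# R^{(2)}$ with $(x_1\# x_2)z=0$. Since $x_1\# x_2$ is homogeneous (of degree $d$) and $R^{(1)}\# R^{(2)}$ is $\NNN$-graded, we may decompose $z$ into homogeneous components and reduce to the case $z\in R^{(1)}_n\otimes_\KKK R^{(2)}_n$ for some $n$.

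Next, write $z=\sum_{i=1}^r a_i\otimes b_i$ with $a_i\in R^{(1)}_n$ and $b_i\in R^{(2)}_n$, chosen so that $b_1,\ldots,b_r$ are $\KKK$-linearly independent in $R^{(2)}_n$ (this is always achievable by a change of basis on the $R^{(2)}$-side of a finite expansion). Because $x_2$ is a nzd on $R^{(2)}$, the map $R^{(2)}_n\to R^{(2)}_{n+d}$ given by multiplication by $x_2$ is $\KKK$-linear and injective; hence $x_2 b_1,\ldots,x_2 b_r$ remain $\KKK$-linearly independent in $R^{(2)}_{n+d}$.

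The equation $(x_1\# x_2)z=0$ now reads $\sum_{i=1}^r (x_1 a_i)\otimes (x_2 b_i)=0$ in $R^{(1)}_{n+d}\otimes_\KKK R^{(2)}_{n+d}$. Since $\KKK$ is a field and the $x_2 b_i$ are linearly independent over $\KKK$, standard properties of tensor products over a field force $x_1 a_i=0$ in $R^{(1)}_{n+d}$ for every $i$. Using now that $x_1$ is a nzd on $R^{(1)}$, each $a_i=0$, and therefore $z=0$. The only subtlety worth watching is the basis-selection step in the second paragraph — making sure the chosen $b_i$'s are genuinely linearly independent so the tensor-product linear-independence argument can be applied — but this is a routine clean-up of any finite sum representation and is not a serious obstacle.
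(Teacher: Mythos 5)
Your proof is correct and follows essentially the same route as the paper: reduce to a homogeneous element, expand it as a sum of simple tensors with one side chosen $\KKK$-linearly independent, and use the nzd hypotheses to preserve linear independence (resp.\ nonvanishing) so that the product cannot vanish. The paper merely argues the contrapositive with the linear independence imposed on the $R^{(1)}$-side factors, while you impose it on the $R^{(2)}$-side and conclude directly; the difference is cosmetic.
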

\begin{proof}
It is enough to show that for any non-zero homogeneous element $\alpha\in R^{(1)}\# R^{(2)}$, it holds
that $(x_1\#x_2)\alpha\neq0$.

Set $\deg\alpha=d'$, $\alpha=\sum_{i=1}^\ell z_i\# w_i$, $z_i\in R^{(1)}_{d'}$, $w_i\in R^{(2)}_{d'}$
$z_1$, \ldots, $z_\ell$ are linearly independent over $\KKK$ and $w_i\neq 0$ for $1\leq i\leq \ell$.
Then $\ell\geq1$ since $\alpha\neq 0$.
Further, $(x_1\#x_2)\alpha=\sum_{i=1}^\ell x_1z_i\# x_2w_i$,
$x_1z_1$, \ldots, $x_1z_\ell$ are linearly independent over $\KKK$ and $x_2w_i\neq 0$ for $1\leq i\leq \ell$.
Therefore, $(x_1\#x_2)\alpha\neq 0$.
\end{proof}
Now we show the following.

\begin{prop}
\mylabel{prop:segre}
Let $\KKK$ be a perfect field and let $R^{(1)}$, $R^{(2)}$, \ldots, $R^{(n)}$ be standard graded
$\KKK$-algebras.
Suppose that $R^{(i)}$ is a reduced CTR ring, $\dim R^{(i)}\geq 2$, $a(R^{(i)})<0$, 
$R^{(i)}$ contains a linear $R^{(i)}$-regular element and $R^{(i)}$ is level and anticanonical level for any $i$.
Set $a_i=a(R^{(i)})$ and $b_i=\min\{m:(\omega^{-1}_{R^{(i)}})_m\neq 0\}$ for $1\leq i\leq n$.
Suppose also that $a_i\geq a_1$ and $b_i\leq b_1$ for $2\leq i\leq n$.
Then the Segre product $R=R^{(1)}\#\cdots\#R^{(n)}$ is also a reduced CTR ring,
$a(R)=a_1$, $\min\{m : (\omega^{-1}_R)_m\neq 0\}=b_1$ and $R$ contains a linear $R$-regular element.
\end{prop}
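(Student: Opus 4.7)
The plan is to establish the structural assertions first (reducedness, existence of a linear regular element, Cohen-Macaulayness, and the explicit form of $\omega_R$), then read off the invariants $a(R)$ and $\min\{m : (\omega_R^{-1})_m \neq 0\}$ from degree-by-degree computations on the Segre product, and finally deduce the CTR property by showing that $\trace_R(\omega_R)$ is itself a Segre product of traces.

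First I would observe that $R$ embeds in $R^{(1)}\otimes_\KKK\cdots\otimes_\KKK R^{(n)}$, which is reduced by iterating Lemma \ref{lem:tensor red}, so $R$ is reduced. Applying Lemma \ref{lem:nzd} inductively to the given linear nonzerodivisors $x_i\in R^{(i)}_1$ produces a linear regular element $x_1\#\cdots\#x_n\in R_1$. The hypotheses $\dim R^{(i)}\geq 2$ and $a(R^{(i)})<0$ put us in the classical Segre-product setting of Goto--Watanabe: $R$ is Cohen-Macaulay and
$$\omega_R = \omega_{R^{(1)}}\#\cdots\#\omega_{R^{(n)}},\qquad (\omega_R)_m=(\omega_{R^{(1)}})_m\otimes_\KKK\cdots\otimes_\KKK(\omega_{R^{(n)}})_m.$$
From the level hypothesis, $(\omega_{R^{(i)}})_m$ is nonzero exactly for $m\geq -a_i$ and is generated over $R^{(i)}$ in the single degree $-a_i$; since $R^{(i)}$ is standard graded with a linear nonzerodivisor, $(\omega_{R^{(i)}})_m=R^{(i)}_{m+a_i}\cdot(\omega_{R^{(i)}})_{-a_i}$ for $m\geq -a_i$. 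Consequently $(\omega_R)_m\neq 0$ precisely when $m\geq\max_i(-a_i)=-a_1$, giving $a(R)=a_1$ together with the levelness of $\omega_R$ in degree $-a_1$.

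Now I would dualize this picture. Using the anticanonical level hypothesis on each $R^{(i)}$ together with the explicit form of $\omega_R$, I aim to establish
$$\omega_R^{-1} = \omega_{R^{(1)}}^{-1}\#\cdots\#\omega_{R^{(n)}}^{-1},$$
from which $\min\{m:(\omega_R^{-1})_m\neq 0\}=\max_i b_i=b_1$ follows by the same degree argument. Combining the two Segre-product descriptions and applying Lemma \ref{lem:tr basic} to the canonical pairing $\omega_R^{-1}\otimes_R\omega_R\to R$, I would then derive the trace identity
$$\trace_R(\omega_R)=\trace_{R^{(1)}}(\omega_{R^{(1)}})\#\cdots\#\trace_{R^{(n)}}(\omega_{R^{(n)}}).$$
Each factor $\trace_{R^{(i)}}(\omega_{R^{(i)}})$ is a graded radical ideal of the reduced $\KKK$-algebra $R^{(i)}$ by the CTR hypothesis on $R^{(i)}$, so Lemma \ref{lem:segre rad} shows that the right-hand side is a radical ideal of $R$, and Proposition \ref{prop:graded} then yields that $R$ is CTR.

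The main obstacle will be the anticanonical formula $\omega_R^{-1}=\omega_{R^{(1)}}^{-1}\#\cdots\#\omega_{R^{(n)}}^{-1}$ and the parallel trace identity. The inclusion $\supseteq$ is formal from the Segre-product pairing. For the reverse inclusion, the joint level and anticanonical level hypotheses together with the aligned minimal degrees ($a_i\geq a_1$ and $b_i\leq b_1$) should force every homogeneous $R$-linear map $\omega_R\to R$ to be determined by its restriction to the lowest-degree generating space $(\omega_R)_{-a_1}=\bigotimes_i(\omega_{R^{(i)}})_{-a_1}$, after which one can separate tensor factors using that $\KKK$ is perfect and each $R^{(i)}$ is reduced (via Lemma \ref{lem:tensor red}). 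The conditions $a_i\geq a_1$ and $b_i\leq b_1$ are precisely what align the relevant extremal degrees across factors so that this separation succeeds and the degree minima $-a_1$ and $b_1$ are genuinely attained in the Segre product.
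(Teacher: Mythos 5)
Your outline follows the same skeleton as the paper (reducedness via Lemma \ref{lem:tensor red}, the linear regular element via Lemma \ref{lem:nzd}, Cohen--Macaulayness and $\omega_R=\omega_{R^{(1)}}\#\cdots\#\omega_{R^{(n)}}$ from Goto--Watanabe, and radicality of the trace via Lemma \ref{lem:segre rad} and Proposition \ref{prop:graded}), but the two steps at the heart of the argument are left as plans rather than proofs. First, the formula $\omega_R^{-1}=\omega_{R^{(1)}}^{-1}\#\cdots\#\omega_{R^{(n)}}^{-1}$ is exactly the hard point, and your proposed mechanism --- that every homogeneous map $\omega_R\to R$ is determined on the lowest-degree piece and that one can then ``separate tensor factors using that $\KKK$ is perfect and each $R^{(i)}$ is reduced'' --- is not an argument: perfectness and reducedness of the factors are irrelevant to computing $\hom_R(\omega_R,R)$ of a Segre product, and no actual device for the reverse inclusion is given. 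The paper does not reprove this fact either; it invokes \cite[Theorem 2.4]{hmp} (together with \cite[Theorem 4.3.1]{gw} for $\omega_R$), remarking that the infinite-field hypothesis there is used only to guarantee a linear nonzerodivisor, which is assumed. Without that citation or a genuine proof, your argument has a hole at its crux.

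Second, the identity $\trace_R(\omega_R)=\trace_{R^{(1)}}(\omega_{R^{(1)}})\#\cdots\#\trace_{R^{(n)}}(\omega_{R^{(n)}})$ does not follow from ``combining the two Segre-product descriptions and applying Lemma \ref{lem:tr basic} to the canonical pairing.'' From the two descriptions one only gets, in each degree $m$, the sum of the diagonal products $\bigl(\omega_{R^{(1)}}\bigr)_p\bigl(\omega_{R^{(1)}}^{-1}\bigr)_q\otimes\bigl(\omega_{R^{(2)}}\bigr)_p\bigl(\omega_{R^{(2)}}^{-1}\bigr)_q$ with $p+q=m$, i.e.\ the inclusion $\trace_R(\omega_R)\subseteq\trace_{R^{(1)}}(\omega_{R^{(1)}})\#\trace_{R^{(2)}}(\omega_{R^{(2)}})$; the reverse inclusion fails to be formal because the two factors may use different splittings $m=p+q$. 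This is precisely where level, anticanonical level and $b_1-a_1\geq b_i-a_i$ must be used: the paper shows $\omega_R$ is generated in the single degree $-a_1$ by $(\omega_{R^{(1)}})_{-a_1}\otimes(\omega_{R^{(2)}})_{-a_2}R^{(2)}_{a_2-a_1}$ and $\omega_R^{-1}$ in degree $b_1$, so the trace is generated in degree $b_1-a_1$, while $\trace_{R^{(2)}}(\omega_{R^{(2)}})$ is generated in degree $b_2-a_2\leq b_1-a_1$, whence both sides are generated over $R$ by the same degree-$(b_1-a_1)$ component. You invoke the degree alignment only in connection with the anticanonical formula, so as written this step is unjustified. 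Finally, the $n$-factor statements should be obtained by induction on $n$ as in the paper, since the two-factor results must be reapplied after verifying that the Segre product again satisfies all the hypotheses --- this is exactly why $a(R)=a_1$, the value $b_1$, levelness, reducedness and the linear nonzerodivisor are carried along in the conclusion.
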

\begin{proof}
By induction on $n$, it is enough to prove the case where $n=2$, since 
by \cite[Theorem 4.2.3]{gw}, 
$\dim(R^{(1)}\# R^{(2)})\geq 2$.
By \cite[Theorem 4.2.3]{gw}, 
$R^{(1)}\#R^{(2)}$ is \cm\ and
by \cite[Theorem 4.3.1]{gw} and \cite[Theorem 2.4]{hmp}, we see that
$\omega_R=\omega_{R^{(1)}}\#\omega_{R^{(2)}}$ and $\omega^{-1}_R=\omega^{-1}_{R^{(1)}}\#\omega^{-1}_{R^{(2)}}$.
Note the assumption of \cite[Theorem 2.4]{hmp} that $\KKK$ is an infinite field is used only for the existence
of linear \nzd.

Since $\omega_{R^{(1)}}$ is generated by $(\omega_{R^{(1)}})_{-a_1}$,
$\omega_{R^{(2)}}$ is generated by $(\omega_{R^{(2)}})_{-a_2}$ and $-a_2\leq -a_1$,
we see that $\omega_R$ is generated by $(\omega_{R^{(1)}})_{-a_1}\otimes (\omega_{R^{(2)}})_{-a_2}R^{(2)}_{a_2-a_1}$.
Similarly, we see that $\omega^{-1}_R$ is generated by 
$(\omega^{-1}_{R^{(1)}})_{b_1}\otimes(\omega^{-1}_{R^{(2)}})_{b_2}R_{b_1-b_2}$.
Therefore, $\trace_R(\omega_R)$ is generated by
$(\omega_{R^{(1)}})_{-a_1}(\omega^{-1}_{R^{(1)}})_{b_1}\otimes(\omega_{R^{(2)}})_{-a_2}(\omega^{-1}_{R^{(2)}})_{b_2}R_{a_2-a_1+b_1-b_2}$.
Since $b_1-a_1\geq b_2-a_2$, we see that 
\begin{eqnarray*}
\trace_R(\omega_R)
&=&\trace_{R^{(1)}}(\omega_{R^{(1)}})\#\trace_{R^{(2)}}(\omega_{R^{(2)}})_{\geq b_1-a_1}\\
&=&\trace_{R^{(1)}}(\omega_{R^{(1)}})\#\trace_{R^{(2)}}(\omega_{R^{(2)}}).
\end{eqnarray*}
Therefore, by Lemma \ref{lem:segre rad}, we see that $\trace_R(\omega_R)$ is a radical ideal,
by Lemma \ref{lem:nzd}, we see that there is a linear $R$-regular element and by
Lemma \ref{lem:tensor red}, we see that $R^{(1)}\otimes R^{(2)}$ is a reduced ring and therefore $R$,
a subring of $R^{(1)}\otimes R^{(2)}$, is also a reduced ring.
Further, since $\omega_R=\omega_{R^{(1)}}\#\omega_{R^{(2)}}$ is generated in degree $-a_1$ and 
$\omega^{-1}_{R^{(1)}}\#\omega^{-1}_{R^{(2)}}$ is generated in degree $b_1$, we see that
$a(R)=a_1$ and $\min\{m : (\omega^{-1})_m\neq 0\}=b_1$.
\end{proof}
In general, the converse of this proposition does not hold.
See Example \ref{ex:hibi}.

As a special case of Proposition \ref{prop:segre}, if $R^{(2)}$, \ldots, $R^{(n)}$ are \gor\ and
$a_1\leq a(R^{(i)})\leq b_1$ for $2\leq i\leq n$, then
$R^{(1)}\#R^{(2)}\#\cdots\#R^{(n)}$ is a CTR ring.
Note that in this case, we do not need to  assume that $R^{(1)}$ is a reduced ring,
since $\trace_R(\omega_R)=\trace_{R^{(1)}}(\omega_{R^{(1)}})\#R^{(2)}$ in the equation above.

We state an example of a pair of graded rings one of it is not CTR, but their Segre product is.
First we recall the definition of order polytope.
Let $P$ be a poset.
The convex polytope
$$
\conv\left\{f\in\RRR^P : 
\vcenter{\hsize=.5\textwidth\relax\noindent
$0\leq f(x)\leq 1$ for any $x\in P$, $x\leq y\Rightarrow f(x)\geq f(y)$}
\right\}
$$
is called the order polytope of $P$ and denoted by $\msOOO(P)$.
See \cite{sta}.
Note that we reverse the inequality of $f(x)$ and $f(y)$ above from \cite{sta} in order to make the
Ehrhart ring of $\msOOO(P)$ is identical with the Hibi ring $\RRRRR_\KKK[\msIII(P)]$ defined by Hibi \cite{hib},
where $\msIII(P)$ is the set of poset ideals of $P$.

For $n\in\ZZZ$, we set
$$
\TTTTTn(P)\define\left\{\nu\in\ZZZ^{P^-} :
\vcenter{\hsize=.5\textwidth\relax\noindent
$\nu(x)\geq n$ for any maximal element $x$ of $P$ and
if $x\covered y$ in $P^{-}$, then $\nu(x)\geq\nu(y)+n$}
\right\},
$$
where $x\covered y$ means that $y$ covers $x$, i.e.
$x<y$ and there is no $z\in P^-$ with $x<z<y$.
Then by \cite[Theorem 2.9]{mfiber}, it holds that
$$
\omega^{(n)}_\ekop=\bigoplus_{\nu\in\TTTTTn(P)}\KKK T^\nu
$$
for any $n\in\ZZZ$.

\begin{example}
\rm
\mylabel{ex:hibi}
Let $\KKK$ be a perfect field and let $P_1$, $P_2$ and $P_3$ be posets with the following Hasse diagrams.
\begin{center}
\begin{tikzpicture}
\coordinate (a1) at (2,1.8);
\coordinate (a2) at (1,.9);
\coordinate (a3) at (2,0);
\coordinate (a4) at (1,-.9);
\coordinate (a5) at (2,-1.8);
\coordinate (b1) at (3,1.2);
\coordinate (b2) at (3,.6);
\coordinate (c1) at (3,-.6);
\coordinate (c2) at (3,-1.2);

\draw (a1)--(a2)--(a3)--(a4)--(a5);
\draw (a1)--(b1)--(b2)--(a3)--(c1)--(c2)--(a5);

\draw[fill] (a1) circle [radius=.1];
\draw[fill] (a2) circle [radius=.1];
\draw[fill] (a3) circle [radius=.1];
\draw[fill] (a4) circle [radius=.1];
\draw[fill] (a5) circle [radius=.1];
\draw[fill] (b1) circle [radius=.1];
\draw[fill] (b2) circle [radius=.1];
\draw[fill] (c1) circle [radius=.1];
\draw[fill] (c2) circle [radius=.1];

\node at (0,0) {$P_1=$};
\node at (1.7,1.8) {$a_1$};
\node at (.7,.9) {$a_2$};
\node at (1.7,0) {$a_3$};
\node at (.7,-.9) {$a_4$};
\node at (1.7,-1.8) {$a_5$};
\node at (3.3,1.2) {$b_1$};
\node at (3.3,.6) {$b_2$};
\node at (3.3,-.6) {$c_1$};
\node at (3.3,-1.2) {$c_2$};

\node at (5,0) {$P_2=$};
\coordinate (w1) at (6,1.2);
\coordinate (w2) at (6,.6);
\coordinate (w3) at (6,0);
\coordinate (w4) at (6,-.6);
\coordinate (w5) at (6,-1.2);

\draw (w1)--(w2)--(w3)--(w4)--(w5);

\draw[fill] (w1) circle [radius=.1];
\draw[fill] (w2) circle [radius=.1];
\draw[fill] (w3) circle [radius=.1];
\draw[fill] (w4) circle [radius=.1];
\draw[fill] (w5) circle [radius=.1];

\node at (8,0) {$P_3=$};
\coordinate (z1) at (9,1.8);
\coordinate (z2) at (9,1.2);
\coordinate (z3) at (9,.6);
\coordinate (z4) at (9,0);
\coordinate (z5) at (9,-.6);
\coordinate (z6) at (9,-1.2);
\coordinate (z7) at (9,-1.8);

\draw (z1)--(z7);

\draw[fill] (z1) circle [radius=.1];
\draw[fill] (z2) circle [radius=.1];
\draw[fill] (z3) circle [radius=.1];
\draw[fill] (z4) circle [radius=.1];
\draw[fill] (z5) circle [radius=.1];
\draw[fill] (z6) circle [radius=.1];
\draw[fill] (z7) circle [radius=.1];

\end{tikzpicture}
\end{center}
Then $E_\KKK[\msOOO(P_1)]$ is CTR.
In fact, let $T^\nu$ be an arbitrary monomial in $\sqrt{\trace(\omega_{E_\KKK[\msOOO(P_1)]})}$,
where $\nu\in\TTTTT^{(0)}(P_1)$.
Then by \cite[Theorem 4.5]{mp}, we see that $\nu(a_1)<\nu(a_3)<\nu(a_5)$.
 Set $I_1=\{b_i: \nu(b_i)>\nu(a_1)\}$, $I_2=\{c_i : \nu(c_i)>\nu(a_3)\}$
 and define $\zeta$ and $\eta\in\ZZZ^{P^-}$ by
 $$
 \zeta(x)=
 \left\{
 \begin{array}{ll}
 -i,&\quad x=a_i,\\
 -i-1+\chi_{I_1}(x),& \quad x=b_i,\\
 -i-3+\chi_{I_2}(x),& \quad x=c_i,\\
 -6,&\quad x=-\infty
 \end{array}
 \right.
 $$
 and $\eta=\nu-\zeta$.
 Then it is verified by hand calculation that $\zeta\in\TTTTT^{(-1)}(P_1)$ and $\eta\in\TTTTT^{(1)}(P_1)$.
 Since $\nu=\eta+\zeta$, we see that $T^\nu\in\trace(\omega_{E_\KKK[\msOOO(P_1)]})$.
 Thus, we see that $E_\KKK[\msOOO(P_1)]$ is a CTR ring.
 Further, we see 
 by \cite[Theorems 3.11 and 3.12]{mfiber} that $E_\KKK[\msOOO(P_1)]$ is level and anticanonical level,
 $a(E_\KKK[\msOOO(P_1)])=-8$ and 
 $\min\{m: (\omega^{(-1)}_{E_\KKK[\msOOO(P_1)]})_m\neq 0\}=-6$.

 Since $E_\KKK[\msOOO(P_2)]$ and $E_\KKK[\msOOO(P_3)]$ are isomorphic to
 polynomial rings with 6 and 8 variables respectively, we see that
 $E_\KKK[\msOOO(P_2)]$ and $E_\KKK[\msOOO(P_3)]$ are
 \gor\ rings with
 $a(E_\KKK[\msOOO(P_2)])=-6$ and $a(E_\KKK[\msOOO(P_3)])=-8$.
 Therefore, we see by Proposition \ref{prop:segre} that
 $$
 E_\KKK[\msOOO(P_1)]\#
 E_\KKK[\msOOO(P_2)]\#
 E_\KKK[\msOOO(P_3)]=
 E_\KKK[\msOOO(P_1\cup P_2\cup P_3)]
 $$
 is a CTR ring.
 However, by \cite[Theorem 2.7]{hmp}, the trace of the canonical module of
$E_\KKK[\msOOO(P_2)]\#
E_\KKK[\msOOO(P_3)]$
is the square of the irrelevant maximal ideal of
$E_\KKK[\msOOO(P_2)]\#
E_\KKK[\msOOO(P_3)]$.
Therefore, 
$E_\KKK[\msOOO(P_2)]\#
E_\KKK[\msOOO(P_3)]$
is not CTR.
\end{example}


\section{CTR property of Schubert cycles}

In this section and next, we state criteria of CTR property of certain classes of rings which motivated us
to define CTR property.
First in this section, we study Schubert cycles.

Before going into the details, we first establish notation and recall basic facts.
For the terms concerning algebras with straightening law (ASL for short) we consult \cite{bv}.
In particular, if $R$ is a graded ASL on a poset $\Pi$ over a field $\KKK$, $\Omega$ a poset ideal
of $\Pi$ and $I=\Omega R$,
we say that $\Omega$ or $I$ is straightening closed if for any incomparable elements
$\upsilon$, $\xi\in\Omega$,
every standard monomial $\mu_i$ appearing in the standard representation 
$$
\upsilon\xi=\sum_i c_i\mu_i, \quad c_i\in\KKK\setminus\{0\}
$$
has at least 2 factors in $\Omega$.
By \cite[Proposition 1]{dep}, we see the following.

\begin{lemma}
\mylabel{lem:str cl ideal}
Let $R$ be a graded ASL over $\KKK$ on a poset $\Pi$, $\Omega $ a straightening closed poset
ideal of $\Pi$ and $I=\Omega R$.
Then for any positive integer $n$, $I^n$ is an ideal of $R$ generated by
$\{\xi_1\cdots \xi_n : \xi_i\in\Omega$ for $1\leq i\leq n$, $\xi_1\leq\cdots\leq \xi_n\}$.
Also, $I^n$ is a $\KKK$-vector subspace of $R$ with basis 
$\{\xi_1\cdots\xi_\ell : \xi_i\in\Pi$ for $1\leq i\leq \ell$, $\xi_1\leq\cdots\leq\xi_\ell$, $\ell\geq n$, 
$\xi_1,\ldots, \xi_n\in\Omega\}$.
\end{lemma}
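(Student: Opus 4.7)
The plan is to proceed by induction on $n$, using the straightening closed hypothesis to control the way products of elements of $\Omega$ rewrite as $\KKK$-linear combinations of standard monomials. Before starting the induction, I would first record one bookkeeping observation: if $\mu=\xi_1\cdots\xi_\ell$ is a standard monomial with $\xi_1\leq\cdots\leq\xi_\ell$ and $\xi_n\in\Omega$, then $\xi_1,\ldots,\xi_n\in\Omega$ automatically, because $\Omega$ is a poset ideal and $\xi_i\leq\xi_n$ for $i\leq n$. So the description of the $\KKK$-basis is really ``at least the bottom $n$ factors lie in $\Omega$''.

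For the base case $n=1$, the ideal-generation part is immediate because $\Omega$ generates $I$ by definition. For the $\KKK$-basis part, the standard monomials with $\xi_1\in\Omega$ are visibly in $I$, and conversely any element $x\in I$ can be written as a $\KKK$-linear combination of products $\omega\cdot\sigma$ with $\omega\in\Omega$ and $\sigma$ a standard monomial. Straightening $\omega\sigma$ by the usual algorithm (applying axiom (H2) of a graded ASL to an incomparable adjacent pair and iterating), each standard monomial produced has smallest factor $\leq\omega$ in $\Pi$; since $\Omega$ is a poset ideal, this smallest factor lies in $\Omega$. Uniqueness of the expression in the standard basis of $R$ gives linear independence.

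For the inductive step from $n$ to $n+1$, write $I^{n+1}=I\cdot I^n$ and invoke the inductive hypothesis to reduce to showing that, for $\omega\in\Omega$ and a standard monomial $\mu=\xi_1\cdots\xi_\ell$ with $\xi_1,\ldots,\xi_n\in\Omega$, the product $\omega\mu$ straightens to a $\KKK$-linear combination of standard monomials whose bottom $n+1$ factors all lie in $\Omega$. Granted this, the generating statement follows by factoring each resulting standard monomial as $(\eta_1\cdots\eta_{n+1})(\eta_{n+2}\cdots\eta_{\ell'})$, and the $\KKK$-basis statement follows since, conversely, any standard monomial $\eta_1\cdots\eta_{\ell'}$ with $\eta_1,\ldots,\eta_{n+1}\in\Omega$ visibly belongs to $I^{n+1}$, and $\KKK$-linear independence is inherited from the standard monomial basis of $R$.

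The main obstacle is proving this ``straightening preserves the count of bottom factors in $\Omega$'' property, and here the straightening closed hypothesis is crucial. The key reduction is to the case of a single incomparable pair in $\Omega$: for $\upsilon,\xi\in\Omega$ incomparable, axiom (H2) expresses $\upsilon\xi$ as a sum of standard monomials $\zeta_1\zeta_2\cdots$ with $\zeta_1<\upsilon$ and $\zeta_1<\xi$, and the straightening closed condition upgrades this to the statement that at least two factors $\zeta_1,\zeta_2$ lie in $\Omega$ (the smallest two, since $\Omega$ is a poset ideal). I would then run the standard straightening algorithm on $\omega\mu$, processing incomparable adjacent pairs from the bottom upward; each application either occurs between two elements of $\Omega$ (where straightening closed preserves the count of bottom factors in $\Omega$) or involves an element below one already in $\Omega$ (where the poset ideal property keeps the smaller factors in $\Omega$). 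Termination is standard for the straightening algorithm (e.g.\ by a reverse lexicographic order on multi-degrees), and induction on the number of straightening steps yields the claim. This is essentially the content of \cite[Proposition 1]{dep}, which the lemma cites, so in the written proof I expect to appeal to that reference after setting up the reduction.
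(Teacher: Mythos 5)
Your proposal is correct and matches the paper: the paper gives no argument of its own, deducing the lemma directly from \cite[Proposition 1]{dep} together with the same observation you make, namely that the poset-ideal property of $\Omega$ automatically places the bottom $n$ factors of a standard monomial in $\Omega$. Your straightening/counting sketch is essentially a reconstruction of the cited argument of De Concini--Eisenbud--Procesi, and since you ultimately defer to that same reference, the two approaches coincide.
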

Since $\Omega$ is a poset ideal, $\xi_n\in\Omega$ implies $\xi_1$, \ldots, $\xi_n\in\Omega$.
However, we expressed the above lemma by the above form for convenience of later use.

For integers $m$ and $n$ with $1\leq m\leq n$ we set
$\Gamma(m\times n)\define\{[a_1,\ldots, a_m] : a_i\in\ZZZ$ for $1\leq i\leq m$, $1\leq a_1<\cdots<a_m\leq n\}$
and define the order on $\Gamma(m\times n)$ by
$$
[a_1, \ldots, a_m]\leq [b_1, \ldots, b_m]
\stackrel{\mathrm{def}}{\Longleftrightarrow} a_i\leq b_i \mbox{ for } 1\leq i\leq m.
$$
Then $\Gamma(m\times n)$ is a distributive lattice whose join, denoted by $\sqcup$,
and meet, denoted by $\sqcap$, are
\begin{eqnarray*}
&&[a_1, \ldots, a_m]\sqcup [b_1, \ldots, b_m]=[\max\{a_1,b_1\},\ldots,\max\{a_m,b_m\}]\\
&\mbox{ and }\\\relax
&&[a_1, \ldots, a_m]\sqcap [b_1, \ldots, b_m]=[\min\{a_1,b_1\},\ldots,\min\{a_m,b_m\}].
\end{eqnarray*}
Further, for $\gamma\in\Gamma(m\times n)$, we set
$\Gamma(m\times n;\gamma)\define\{\delta\in\Gamma(m\times n) : \delta\geq\gamma\}$.
Then $\Gamma(m\times n;\gamma)$ is a sublattice of $\Gamma(m\times n)$.

For an $m\times n$ matrix $M$ and $\gamma=[a_1, \ldots, a_m]\in\Gamma(m\times n)$,
we denote by $\gamma_M$ or $[a_1, \ldots, a_m]_M$ the $m$-minor of $M$ consisting of
columns $a_1$, \ldots, $a_m$.

Let $m$ and $n$ be integers with $1\leq m<n$, $V$ an $n$-dimensional $\KKK$-vector space and
$X=(X_{ij})$ an $m\times n$ matrix of indeterminates.
It is known that $\KKK$-subalgebra $G(X)$ of the polynomial ring
$\KKK[X_{ij} : 1\leq i\leq m$, $1\leq j\leq n]$ generated by the maximal minors of $X$ is the homogeneous
coordinate ring of the Grassmannian of the $m$-dimensional subspaces $\gr_m(V)$ of $V$.
Further $G(X)$ is an ASL on $\Gamma(m\times n)$ over $\KKK$ by the identification
$\Gamma(m\times n)\ni \gamma\leftrightarrow \gamma_X\in G(X)$.
See \cite[\S 4]{bv} for details.

We introduce the column degree, denoted by $\cdeg_j$ by setting
$$
\cdeg_j X_{k\ell}=
\left\{\begin{array}{ll}
1&\quad\ell=j,\\
0&\quad\ell\neq j
\end{array}
\right.$$ 
for $1\leq j\leq n$.
Further, we define grading of $G(X)$ by $\deg\gamma=1$ for any $\gamma\in\Gamma(m\times n)$.
Note that $\deg a=(1/m)\sum_{j=1}^n\cdeg_j a$ for any homogeneous element
$a\in G(X)$ in the $\NNN^n$-grading defined by column degree.
Note also for any incomparable elements $\upsilon$, $\xi\in\Gamma(m\times n)$ the standard
representation of $\upsilon\xi$ is of the following form.
$$
\upsilon\xi=\sum_{i} c_i\gamma_i\delta_i, \quad c_i\in\KKK, \gamma_i\leq \delta_i
$$
$$
\cdeg_j\upsilon+\cdeg_j\xi=\cdeg_j\gamma_i+\cdeg_j\delta_i
$$
for $1\leq j\leq n$ and for any $i$.
See \cite[\S 4]{bv} for details.

Now let $0=V_0\subset V_1\subset\cdots\subset V_n=V$ be a complete flag of $V$.
For integers $b_1$, \ldots, $b_m$ with $1\leq b_1<\cdots<b_m\leq n$, set
$\Omega(b_1, \ldots, b_m)\define\{W\in\gr_m(V) : \dim(W\cap V_{b_i})\geq i$ for $ 1\leq i\leq m\}$.
This is a subvariety of $\gr_m(V)$ called the Schubert subvariety of $\gr_m(V)$.
Set $a_i\define n-b_{m-i+1}+1$ for $1\leq i\leq m$ and $\gamma\define [a_1, \ldots, a_m]$.
Then the homogeneous coordinate ring of $\Omega(b_1, \ldots, b_m)$ is
$G(X)/(\delta\in\Gamma(m\times n) : \delta\not\geq \gamma)$
and called  the Schubert cycle.
By \cite[Proposition 1.2]{dep}, 
$G(X)/(\delta\in\Gamma(m\times n) : \delta\not\geq \gamma)$
is an ASL on $\Gamma(m\times n)\setminus\{\delta\in\Gamma(m\times n) : \delta\not\geq\gamma\}
=\Gamma(m\times n;\gamma)$.
Therefore the homogeneous coordinate ring of $\Omega(b_1, \ldots, b_m)$ is an ASL over 
$\KKK$ on $\Gamma(m\times n;\gamma)$.
We denote this ring by $G(X;\gamma)$.
See \cite[\S 1.D]{bv} for details.
Further $G(X;\gamma)$ is a normal domain by \cite[Theorem 6.3]{bv}.

Now we fix $\gamma=[a_1, \ldots, a_m]\in\Gamma(m\times n)$ and consider the canonical class
of $G(X;\gamma)$.
If $\gamma=[n-m+1, \ldots, n]$, then $G(X;\gamma)$ is 
isomorphic to 
a polynomial ring with 1 variable over $\KKK$.
Therefore, we assume that $\gamma \neq[n-m+1, \ldots, n]$ in the following.
We first decompose $\gamma$ into blocks and gaps as in \cite[\S 6]{bv}:
$[a_1, \ldots, a_m]=[\beta_0,\beta_1,\ldots,\beta_{t+1}]$,
$\beta_0=a_1, a_2, \ldots, a_{k(1)}$, $\beta_1=a_{k(1)+1}, a_{k(1)+2}, \ldots, a_{k(2)}$, \ldots,
$\beta_t=a_{k(t)+1}, a_{k(t)+2}, \ldots, a_{k(t+1)}$, $\beta_{t+1}=a_{k(t+1)+1}, a_{k(t+1)+2}, \ldots, n$.
$a_{j+1}-a_j=1$ if $k(i)<j<k(i+1)$ for some $i$ with $0\leq i\leq t+1$,
where $k(0)\define 0$,
$a_{k(i)+1}-a_{k(i)}\geq 2$ for $1\leq i\leq t+1$,
where $a_{m+1}\define n+1$.
Note that $\beta_{t+1}$ may be an empty block:
$\beta_{t+1}=\emptyset$ if and only if $a_m<n$.
This part is different from \cite{bv} but this makes the case dividing simpler.
We also define symbols of gaps between blocks by setting
$\chi_i\define\{j\in\ZZZ : a_{k(i+1)}<j<a_{k(i+1)+1}\}$ for $0\leq i\leq t$.

Next, we set
$\zeta_i\define[\beta_0, \beta_1, \ldots, \beta_{i-1}, a_{k(i)+1}, a_{k(i)+2}, \ldots, a_{k(i+1)-1}, a_{k(i+1)}+1, 
\beta_{i+1}, \ldots, \beta_{t+1}]$,
$\Omega_i\define\{\delta\in\Gamma(m\times n;\gamma): \delta\not\geq\zeta_i\}$
($=\Gamma(m\times n;\gamma)\setminus\Gamma(m\times n;\zeta_i)$)
and $J(x;\zeta_i)\define\Omega_i G(X;\gamma)$ for $0\leq i\leq t$.
Then $G(X;\gamma)/J(x;\zeta_i)\cong G(X;\zeta_i)$ is an integral domain with
$\dim(G(X;\gamma)/J(x;\zeta_i))=\dim G(X;\gamma)-1$.
In particular, $J(x;\zeta_i)$ is a height 1 prime ideal of $G(X;\gamma)$ for $0\leq i\leq t$.
Further,
$\gamma G(X;\gamma)=\bigcap_{i=0}^t J(x;\zeta_i)$
for $0\leq i\leq t$.
See \cite[\S\S 5 and 6]{bv}.
Note that $\Omega_i=\{[b_1,\ldots, b_m]\in\Gamma(m\times n;\gamma): b_{k(i+1)}=a_{k(i+1)}\}$
for $0\leq i\leq t$.

Set
$$
\kappa_i\define\sum_{j=0}^i |\beta_j|+\sum_{j=i}^t |\chi_j|
$$
for $0\leq i\leq t$,
$\kappa\define\max\{\kappa_i: 0\leq i\leq t\}$
and
$\kappa'\define\min\{\kappa_i: 0\leq i\leq t\}$.
Then by \cite[Theorem 8.12 and Corollary 8.13]{bv}, we see the following.

\begin{fact}
\mylabel{fact:bv 812}
The class
$$
\sum_{i=0}^t\kappa_i\cl(J(x;\zeta_i))
$$
in the divisor class group $\Cl(G(X;\gamma))$ is the canonical class of $G(X;\gamma)$
and $G(X;\gamma)$ is \gor\ if  and only if $\kappa-\kappa'=0$.
\end{fact}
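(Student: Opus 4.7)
The plan is to deduce both assertions from a careful analysis of the divisor class group of $G(X;\gamma)$ together with the expression of its canonical module as a divisorial ideal. First I would establish that $\Cl(G(X;\gamma))$ is generated by the classes $\cl(J(x;\zeta_i))$ for $0\leq i\leq t$. Localizing $G(X;\gamma)$ by inverting $\gamma$ produces a normal ring which is essentially controlled by the straightening-closed complement of $\{\zeta_0,\ldots,\zeta_t\}$, and by the ASL arguments in \cite[\S 7]{bv} it is a UFD. By Nagata's theorem, $\Cl(G(X;\gamma))$ is then generated by the classes of the height one primes containing $\gamma$, namely exactly $J(x;\zeta_0), \ldots, J(x;\zeta_t)$. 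The equality $\gamma G(X;\gamma) = \bigcap_{i=0}^t J(x;\zeta_i)$ produces the relation $\sum_{i=0}^t \cl(J(x;\zeta_i)) = 0$, and analyzing the discrete valuations associated to each $J(x;\zeta_i)$ on further generators of the ASL shows this is the only relation, so $\Cl(G(X;\gamma)) \cong \ZZZ^{t+1}/\ZZZ\cdot(1,1,\ldots,1)$.

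Next I would compute the canonical module. The ring $G(X;\gamma)$ carries a natural $\NNN^n$-grading coming from the column degrees, under which all $J(x;\zeta_i)$ and $\omega_{G(X;\gamma)}$ are homogeneous divisorial ideals. Using the formula for the canonical module of a normal graded ASL (computable from the $a$-invariant and the shape of top-degree standard monomials), one writes $\omega_{G(X;\gamma)}$ up to linear equivalence as $\bigcap_{i=0}^t J(x;\zeta_i)^{(\kappa_i)}$. The exponent $\kappa_i$ is the value of the discrete valuation $v_i$ associated to $J(x;\zeta_i)$ on a generator of the top-degree part of $\omega_{G(X;\gamma)}$. A careful bookkeeping of how blocks $\beta_j$ contribute to $v_i$ for $j\leq i$ and how the gaps $\chi_j$ contribute for $j\geq i$ yields $\kappa_i = \sum_{j=0}^i |\beta_j| + \sum_{j=i}^t |\chi_j|$, which gives the canonical class formula.

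Finally, the Gorenstein criterion is a formal consequence. A normal graded domain is \gor\ iff its canonical class is trivial in $\Cl$; under the isomorphism $\Cl(G(X;\gamma))\cong\ZZZ^{t+1}/\ZZZ\cdot(1,\ldots,1)$, the class $\sum_{i=0}^t \kappa_i \cl(J(x;\zeta_i))$ vanishes iff $\kappa_0 = \kappa_1 = \cdots = \kappa_t$, equivalently $\kappa - \kappa' = 0$. The main obstacle is the explicit evaluation of the valuations $v_i$ on a canonical generator: identifying which blocks and gaps contribute, and with what multiplicity, demands a detailed case analysis of the ASL standard representations across the boundary strata $\zeta_i$, and this combinatorial accounting is what ultimately produces the precise coefficients $\kappa_i$.
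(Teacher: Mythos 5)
First, a point of comparison: the paper itself does not prove this statement at all --- it is quoted as a Fact from \cite[Theorem 8.12 and Corollary 8.13]{bv}, so your proposal has to be measured against Bruns--Vetter's argument. In outline you do follow it: invert $\gamma$, use factoriality of $G(X;\gamma)[\gamma^{-1}]$ and Nagata's theorem to see that $\Cl(G(X;\gamma))$ is generated by the $\cl(J(x;\zeta_i))$, extract the relation $\sum_{i=0}^t\cl(J(x;\zeta_i))=0$ from $\gamma G(X;\gamma)=\bigcap_{i=0}^t J(x;\zeta_i)$, express the canonical class in these generators, and read off the Gorenstein criterion from $\Cl(G(X;\gamma))\cong\ZZZ^{t+1}/\ZZZ\cdot(1,\ldots,1)$: the class $\sum_i\kappa_i\cl(J(x;\zeta_i))$ vanishes iff all $\kappa_i$ coincide, i.e. $\kappa-\kappa'=0$. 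That last deduction, and the Nagata step, are sound.

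However, the proposal has a genuine gap exactly where the content of the statement lies. The assertion that $\omega_{G(X;\gamma)}$ is, up to linear equivalence, $\bigcap_{i=0}^t J(x;\zeta_i)^{(\kappa_i)}$ with exponents ``computable from the $a$-invariant and the shape of top-degree standard monomials'' is not an argument: the $a$-invariant is a single integer and cannot determine the divisor class of the canonical module, and you give no mechanism for producing a concrete divisorial representative of $\omega_{G(X;\gamma)}$ on which the valuations $v_i$ could be evaluated. In \cite{bv} this is precisely the hard part of Theorem 8.12: the canonical class is pinned down by a localization/induction argument reducing to smaller Schubert cycles and polynomial rings, and the block/gap count $\kappa_i=\sum_{j\le i}|\beta_j|+\sum_{j\ge i}|\chi_j|$ is the outcome of that computation; your closing sentence explicitly defers this ``combinatorial accounting,'' which is the theorem itself rather than a routine verification. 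Likewise, the claim that $\sum_i\cl(J(x;\zeta_i))=0$ is the \emph{only} relation is asserted but not proved: one must show that any principal divisor supported on the $J(x;\zeta_i)$ is an integer multiple of the divisor of $\gamma$ (for instance by a column-degree or valuation argument), and this is again part of the cited material. So the route is the right one --- essentially the one the cited source carries out --- but the two computations that constitute the statement are missing from the proposal.
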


Now we state the characterization of CTR property of $G(X;\gamma)$.

\begin{thm}
\mylabel{thm:schubert}
With the above notation, $G(X;\gamma)$ is CTR if and only if $\kappa-\kappa'\leq 1$.
\end{thm}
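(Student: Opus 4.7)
The plan is to work with $\omega_R$ and $\omega_R^{(-1)}$ as divisorial fractional ideals via the class-group description of $R=G(X;\gamma)$.  Set $d\define\kappa-\kappa'$, $\lambda_i\define\kappa_i-\kappa'$, and $\mu_i\define\kappa-\kappa_i$, so $\lambda_i+\mu_i=d$ and $\lambda_i,\mu_i\in\{0,\ldots,d\}$.  Using Fact \ref{fact:bv 812} together with the class-group relation $\sum_i\cl(J(x;\zeta_i))=0$ (from $\gamma R=\bigcap_iJ(x;\zeta_i)$), one may take $\omega_R=\bigcap_iJ(x;\zeta_i)^{(\lambda_i)}$ as an ideal of $R$; the fractional ideal $\omega_R^{(-1)}$ then equals $\gamma^{-d}\bigcap_iJ(x;\zeta_i)^{(\mu_i)}$, because its divisor is $-\sum_i\lambda_i[J(x;\zeta_i)]$.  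By the discussion of trace of divisorial ideals in \S 2, $\trace_R(\omega_R)=\omega_R\cdot\omega_R^{(-1)}$.

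A preparatory step identifies the non-\gor\ locus: for any $\pppp\in\spec(R)$, localization kills the classes of primes not contained in $\pppp$, so $R_\pppp$ is \gor\ if and only if $\{\kappa_i:J(x;\zeta_i)\subseteq\pppp\}$ is a constant set.  Fact \ref{fact:gor trace} therefore gives
\[
V(\trace_R(\omega_R))=\bigcup_{\kappa_i\neq\kappa_j}V(J(x;\zeta_i)+J(x;\zeta_j)),\qquad
\sqrt{\trace_R(\omega_R)}=\bigcap_{\kappa_i\neq\kappa_j}\sqrt{J(x;\zeta_i)+J(x;\zeta_j)}.
\]

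For the ``if'' direction, $d=0$ is the \gor\ case and is trivial.  When $d=1$, partition $\{0,\ldots,t\}=A\sqcup B$ by $A=\{i:\kappa_i=\kappa'+1\}$, $B=\{i:\kappa_i=\kappa'\}$; then $\omega_R=\bigcap_{i\in A}J(x;\zeta_i)$, $\omega_R^{(-1)}=\gamma^{-1}\bigcap_{i\in B}J(x;\zeta_i)$, and
\[
\gamma\cdot\trace_R(\omega_R)=\Bigl(\bigcap_{i\in A}J(x;\zeta_i)\Bigr)\Bigl(\bigcap_{i\in B}J(x;\zeta_i)\Bigr).
\]
The goal is to show this ideal equals $\gamma\cdot\bigcap_{i\in A,\,j\in B}\sqrt{J(x;\zeta_i)+J(x;\zeta_j)}$, which by the preparatory step is $\gamma\cdot\sqrt{\trace_R(\omega_R)}$.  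My tool is Lemma \ref{lem:str cl ideal}: give standard-monomial bases for both factors, multiply, straighten incomparable products via the Pl\"ucker-type relations recorded in \S 4, and divide through by $\gamma$; then one identifies the resulting $\KKK$-span with the claimed intersection of primes.  Each $\sqrt{J(x;\zeta_i)+J(x;\zeta_j)}$ turns out to be a straightening-closed ideal of the form $J(x;\eta)$ for a suitable $\eta\ge\gamma$, which makes the intersection automatically radical.

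For the ``only if'' direction, assume $d\ge 2$ and pick $i_0,j_0$ with $\kappa_{i_0}=\kappa$, $\kappa_{j_0}=\kappa'$, so their discrepancy equals $d$.  Localize at a minimal prime $\pppp$ over $J(x;\zeta_{i_0})+J(x;\zeta_{j_0})$: then $R_\pppp$ is a two-dimensional local normal \cm\ domain in which all other $J(x;\zeta_k)R_\pppp$ become units, $\Cl(R_\pppp)\cong\ZZZ$ is generated by $\cl(J(x;\zeta_{i_0})R_\pppp)$, and the canonical class of $R_\pppp$ is $d\cdot\cl(J(x;\zeta_{i_0})R_\pppp)$.  By Proposition \ref{prop:local} and Lemma \ref{lem:tr basic} it suffices to prove $\trace_{R_\pppp}(\omega_{R_\pppp})$ is not radical.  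Since $R_\pppp$ is non-\gor\ of dimension $2$, its non-\gor\ locus is just the closed point, so $\sqrt{\trace_{R_\pppp}(\omega_{R_\pppp})}=\pppp R_\pppp$; the remaining task is to show $\trace_{R_\pppp}(\omega_{R_\pppp})\subsetneq\pppp R_\pppp$.  Using the divisor-class presentation, $\trace_{R_\pppp}(\omega_{R_\pppp})=\gamma^{-d}J(x;\zeta_{i_0})^{(d)}J(x;\zeta_{j_0})^{(d)}$; a length computation, which turns on the fact that the discrepancy $d\ge 2$ forces the symbolic powers to concentrate in sufficiently high degrees relative to a local parameter filtration, yields $\mathrm{length}_{R_\pppp}(R_\pppp/\trace_{R_\pppp}(\omega_{R_\pppp}))\ge 2$, producing the required proper containment.

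The main obstacle lies in these two explicit verifications: the standard-monomial straightening identity in the $d=1$ case (establishing that the product of the two intersections is $\gamma$ times the intersection of the primes $\sqrt{J(x;\zeta_i)+J(x;\zeta_j)}$), and the local length/colon computation in the $d\ge 2$ case (ruling out equality $\trace_{R_\pppp}(\omega_{R_\pppp})=\pppp R_\pppp$).  The divisor-theoretic framework of the first two paragraphs reduces each direction of the theorem to precisely one combinatorial/algebraic identity.
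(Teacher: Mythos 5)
Your framework (divisorial description of $\omega$ and $\omega^{(-1)}$, $\trace=\omega\cdot\omega^{(-1)}$, partition of $\{0,\ldots,t\}$ by the values $\kappa_i$ when $\kappa-\kappa'=1$) matches the paper's, but the two places where you locate ``the main obstacle'' are exactly where the proof has to live, and they are not carried out. In the case $\kappa-\kappa'=1$ the entire content is the standard-monomial analysis: one must show that the product $(\bigcap_{i\in I_1}J(x;\zeta_i))(\bigcap_{i\in I_2}J(x;\zeta_i))$ equals $\gamma$ times an intersection of primes, which the paper does by proving that for $\xi\in\bigcap_{I_1}\Omega_i$, $\xi'\in\bigcap_{I_2}\Omega_i$ the straightening of $\xi\xi'$ is $c\,\gamma(\xi\sqcup\xi')$ (using $\xi\sqcap\xi'=\gamma$ and column degrees), by identifying the correct primes $J(x;\sigma_i)$ (from \cite[(6.8)]{bv}) indexed only by the \emph{adjacent} sign changes $I'\cup I''$, and by an explicit construction of $\xi,\xi'$ realizing any $\beta$ in the intersection. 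Your preparatory claim that $\sqrt{\trace_R(\omega_R)}=\bigcap_{\kappa_i\neq\kappa_j}\sqrt{J(x;\zeta_i)+J(x;\zeta_j)}$ over \emph{all} pairs is itself unproved: it rests on asserting that $R_\pppp$ is \gor\ iff the $\kappa_i$ with $J(x;\zeta_i)\subseteq\pppp$ are constant, but the kernel of $\Cl(R)\to\Cl(R_\pppp)$ is generated by the classes of \emph{all} height-one primes not contained in $\pppp$, not just the omitted $J(x;\zeta_k)$'s, so $\Cl(R_\pppp)$ could a priori acquire extra relations (even torsion) and this criterion does not follow from Fact \ref{fact:bv 812}. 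The same unestablished structure of $\Cl(R_\pppp)$ underlies your claim that each $\sqrt{J(x;\zeta_i)+J(x;\zeta_j)}$ is a single prime $J(x;\eta)$.

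The ``only if'' direction has the same problem in sharper form: you localize at a minimal prime $\pppp$ of $J(x;\zeta_{i_0})+J(x;\zeta_{j_0})$ and assert that all other $J(x;\zeta_k)R_\pppp$ are units (they need not be --- other $J(x;\zeta_k)$ may be contained in $\pppp$), that $\Cl(R_\pppp)\cong\ZZZ$ generated by $\cl(J(x;\zeta_{i_0})R_\pppp)$ (unproved, for the reason above; if this class had torsion of order dividing $\kappa-\kappa'$ the localization could even be \gor\ and the argument collapses), and finally that a length computation gives $\mathrm{length}(R_\pppp/\trace_{R_\pppp}(\omega_{R_\pppp}))\geq 2$, which you do not perform. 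The paper disposes of $\kappa-\kappa'\geq 2$ with a short graded degree count that your route misses entirely: writing $\aaaa=\bigcap_i J_i^{\kappa_i}$ and using $J_i^{(\ell)}=J_i^\ell$ \cite[Corollary 9.18]{bv} together with Lemma \ref{lem:str cl ideal}, one sees $\trace(\aaaa)$ is generated in degree $\kappa-\kappa'\geq 2$, so the degree-one element $\gamma$ is not in $\trace(\aaaa)$, while $\gamma^{\kappa-\kappa'}\in\trace(\aaaa)$; hence the trace is not radical. As it stands your proposal is a plausible plan sharing the paper's skeleton, but both directions rest on deferred computations and on class-group facts about localizations of Schubert cycles that you have not justified.
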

\begin{proof}
By Fact \ref{fact:bv 812}, $G(X;\gamma)$ is \gor\ if and only if $\kappa-\kappa'=0$.
Therefore, we may assume that $\kappa-\kappa'\geq 1$.

Set $J_i=J(x;\zeta_i)$ for $0\leq i\leq t$.
Then by \cite[Corollary 9.18]{bv}, we see that
$J_i^{(\ell)}=J_i^\ell$ for any positive integer $\ell$ and $0\leq i\leq t$.
Set also $\aaaa=\bigcap_{i=0}^t J_i^{\kappa_i}$.
Then by Fact \ref{fact:bv 812}, we see that $\aaaa$ is a canonical module of $G(X;\gamma)$ up to
shift of degree and $\trace(\aaaa)=\aaaa \aaaa^{(-1)}$ is the trace of the graded canonical module
of $G(X;\gamma)$.
(In fact, $\aaaa$ is the graded canonical module of $G(X;\gamma)$ by \cite[Proposition 3.7]{fm},
but we do not use this fact.)

First we consider the case where $\kappa-\kappa'=1$.
Set
\begin{eqnarray*}
I_1&=&\{i: 0\leq i\leq t, \kappa_i=\kappa\} \quad\mbox{and}\\
I_2&=&\{i: 0\leq i\leq t, \kappa_i=\kappa'\}.
\end{eqnarray*}
Then $I_1\cup I_2=\{0,1,\ldots, t\}$.
Since 
$\aaaa=\bigcap_{i=0}^t J_i^{\kappa_i}$ and $\Omega_i$ is straightening closed for any $i$
by \cite[Lemma 9.1]{bv},
$\aaaa$ is generated by
$\{\xi_1\cdots\xi_\kappa : \xi_1, \ldots, \xi_\kappa\in\Omega_i$ for $i\in I_1$ and
$\xi_1, \ldots, \xi_{\kappa-1}\in\Omega_i$ for $i\in I_2\}$
as an ideal of $G(X;\gamma)$ by Lemma \ref{lem:str cl ideal}.
Since $\bigcap_{i=0}^t\Omega_i=\{\gamma\}$, we see that
$\aaaa=\gamma^{\kappa-1}(\bigcap_{i\in I_1}J_i)$.
Note that $\bigcap_{i\in I_1}J_i$ is an ideal of $G(X;\gamma)$ generated by $\bigcap_{i\in I_1}\Omega_i$.

On the other hand, since $\aaaa^{(-1)}=\bigcap_{i=0}^t J_i^{(-\kappa_i)}$ and
$\gamma G(X;\gamma)=\bigcap_{i=0}^t J_i$,
we see that $\gamma^\kappa\aaaa^{(-1)}=\bigcap_{i=0}^t J_i^{(\kappa-\kappa_i)}=\bigcap_{i\in I_2}J_i$.
Therefore, $\gamma^\kappa\aaaa^{(-1)}$ is generated by $\bigcap_{i\in I_2}\Omega_i$ as an ideal of 
$\Gamma(m\times n;\gamma)$.
Therefore,
$$
\gamma^\kappa\trace(\aaaa)=\aaaa(\gamma^\kappa\aaaa^{(-1)})
=(\gamma^{\kappa-1}(\bigcap_{i\in I_1}J_i))(\bigcap_{i\in I_2} J_i)
$$
and we see that 
$$
\gamma\trace(\aaaa)=(\bigcap_{i\in I_1}J_i)(\bigcap_{i\in I_2} J_i).
$$
Thus, $\gamma\trace(\aaaa)$ is generated by $\{\xi\xi' : \xi\in\bigcap_{i\in I_1}\Omega_i, 
\xi'\in\bigcap_{i\in I_2}\Omega_i\}$.

Consider the standard representation of $\xi\xi'$ for arbitrary $\xi\in\bigcap_{i\in I_1}\Omega_i$ and
$\xi'\in\bigcap_{i\in I_2}\Omega_i$.
First note that $\xi\sqcap\xi'\in\bigcap_{i=0}^t\Omega_i=\{\gamma\}$ since $\Omega_i$ is a poset ideal
for any $i$.
Thus, $\xi\sqcap\xi'=\gamma$.
Suppose that
$$
\xi\xi'=\sum_{\ell} c_\ell\alpha_\ell\beta_\ell
$$
is the standard representation of $\xi\xi'$ in $G(X;\gamma)$.
Then, since $\xi\sqcap\xi'=\gamma\leq\alpha_\ell\leq\xi$, $\xi'$, 
$\alpha_\ell=\gamma$ for any $\ell$.
Moreover, since
$$
\cdeg_j\gamma+\cdeg_j\beta_\ell=\cdeg_j\xi+\cdeg_j\xi'=\cdeg_j\xi\sqcap\xi'+\cdeg_j\xi\sqcup\xi'
$$
for any $j$, we see that $\beta_\ell=\xi\sqcup\xi'$ for any $\ell$.
Therefore, the standard representation of $\xi\xi'$ is of the following form.
$$
\xi\xi'=c\gamma(\xi\sqcup\xi'), \quad c\in\KKK, c\neq0.
$$
(In fact, $c=1$, but we do not use this fact.)

Now consider $\xi\sqcup\xi'$.
Set $k(0)=0$ and
$$
\sigma_i\define[\beta_0, \ldots, \beta_{i-2}, a_{k(i-1)+1},  \ldots, a_{k(i)-1},
a_{k(i)+1}, \ldots, a_{k(i+1)}, a_{k(i+1)}+1, \beta_{i+1},  \ldots, \beta_{t+1}]
$$
for $1\leq i\leq t$ as in \cite[(6.8)]{bv},
$$
\Theta_i\define\Gamma(m\times n;\gamma)\setminus\Gamma(m\times n;\sigma_i)
$$
and $J(x;\sigma_i)=\Theta_i G(X;\gamma)$ for $1\leq i\leq t$.
Then $G(X;\gamma)/J(x;\sigma_i)\cong G(X;\sigma_i)$  is an integral domain and therefore $J(x;\sigma_i)$
is a prime ideal of $G(X;\gamma)$ for $1\leq i\leq t$.
Note that
$$
\Theta_i=\{[b_1, \ldots, b_m]\in\Gamma(m\times n;\gamma) : b_{k(i)}<a_{k(i)+1}\}
$$
for $1\leq i\leq t$.
Further, it is easily verified that $\zeta_{i-1}$, $\zeta_i\leq \sigma_i$ and therefore
$\Omega_{i-1}$, $\Omega_i\subset\Theta_i$ for $1\leq i\leq t$.

Set
$$
I'\define \{i : i\in I_1, i-1\in I_2\}\quad\mbox{and}\quad
I''\define\{i : i\in I_2, i-1\in I_1\}.
$$
Since $\xi\in\bigcap_{i\in I_1}\Omega_i$ and $\xi'\in\bigcap_{i\in I_2}\Omega_i$,
we see that $\xi\in\bigcap_{i\in I'\cup I''}\Theta_i$ and $\xi'\in\bigcap_{i\in I'\cup I''}\Theta_i$.
On the other hand, since 
$$
\Theta_i=\{[b_1, \ldots, b_m]\in\Gamma(m\times n;\gamma) : b_{k(i)}<a_{k(i)+1}\},
$$
we see that $\xi\sqcup\xi'\in\bigcap_{i\in I'\cup I''}\Theta_i$.
Thus, $\xi\sqcup\xi'\in \bigcap_{i\in I'\cup I''}J(x;\sigma_i)$.
Since $\xi$ (resp.\ $\xi'$) is an arbitrary element of $\bigcap_{i\in I_1}\Omega_i$
(resp.\ $\bigcap_{i\in I_2}\Omega_i$), we see that 
$$
\gamma\trace(\aaaa)\subset\gamma(\bigcap_{i\in I'\cup I''}J(x;\sigma_i)).
$$
Thus, $\trace(\aaaa)\subset\bigcap_{i\in I'\cup I''}J(x;\sigma_i)$.

Now consider the reverse inclusion.
It is enough to show that for any $\beta\in\bigcap_{i\in I'\cup I''}\Theta_i$, it holds that 
$\gamma\beta\in\gamma\trace(\aaaa)$.
Set $\beta=[b_1,\ldots, b_m]$.
Set also $k(0)=0$,
\begin{eqnarray*}
H_1&=&\{j\in\ZZZ: \exists i\in I_1; k(i)<j\leq k(i+1)\},\\ 
H_2&=&\{j\in\ZZZ: \exists i\in I_2; k(i)<j\leq k(i+1)\},\mbox{ and }\\
H_3&=&\{k(t+1)+1, \ldots, m\}.
\end{eqnarray*}
Note that $H_3=\emptyset $ if and only if $a_m<n$ and $a_j=b_j$ for $j\in H_3$.
We define integers $c_1$, \ldots, $c_m$ and $c'_1, \ldots, c'_m$ by
$$
c_j=
\left\{
\begin{array}{ll}
a_j,\quad \mbox{ if $j\in H_1\cup H_3$},\\
b_j,\quad \mbox{ if $j\in H_2$},
\end{array}
\right.
\qquad
c'_j=
\left\{
\begin{array}{ll}
a_j,\quad \mbox{ if $j\in H_2\cup H_3$},\\
b_j,\quad \mbox{ if $j\in H_1$}.
\end{array}
\right.
$$
Here we show the following key fact.

\begin{claim}
\mylabel{claim:in gamma}
It holds that $c_1<\cdots<c_m$ and $c'_1<\cdots<c'_m$.
\end{claim}
We prove the claim $c_1<\cdots<c_m$.
Claim $c'_1<\cdots<c'_m$ is proved similarly.

If $j$, $j+1\in H_1\cup H_3$, then $c_j=a_j<a_{j+1}=c_{j+1}$
and if $j$, $j+1\in H_2$, then $c_j=b_j<b_{j+1}=c_{j+1}$.
Assume that $j\in H_1\cup H_3$ and $j+1\in H_2$.
Then $c_j=a_j<a_{j+1}\leq b_{j+1}=c_{j+1}$.
Finally, assume that $j\in H_2$ and $j+1\in H_1\cup H_3$.
Then $j=k(i+1)$ for some $i\in I_2$.
If $i=t$, then $j+1\in H_3$ and we see that $c_j=b_j<b_{j+1}=c_{j+1}$.
If $i<t$, then $i\in I_2$ and $i+1\in I_1$.
Therefore, $i+1\in I'$.
Since $\beta\in\Theta_{i+1}$, we see that $c_j=b_{k(i+1)}<a_{k(i+1)+1}=c_{j+1}$
and the claim is proved.

By Claim \ref{claim:in gamma}, we see that $[c_1, \ldots, c_m]$, $[c'_1,\ldots, c'_m]\in\Gamma(m\times n)$.
Set $\xi\define[c_1, \ldots, c_m]$ and $\xi'\define[c'_1, \ldots, c'_m]$.
Then, since $\xi$, $\xi'\geq\gamma$, we see that $\xi$, $\xi'\in\Gamma(m\times n;\gamma)$.
Moreover, since $c_{k(i+1)}=a_{k(i+1)}$ for $i\in I_1$ (resp.\ $c'_{k(i+1)}=a_{k(i+1)}$ for $i\in I_2$),
we see that $\xi\in\bigcap_{i\in I_1}\Omega_i$ (resp.\ $\xi'\in\bigcap_{i\in I_2}\Omega_i$).
Since $\xi\sqcap \xi'=\gamma$ and $\xi\sqcup\xi'=\beta$, we see that the standard representation
of $\xi\xi'$ is the following form.
$$
\xi\xi'=c\gamma\beta,\quad c\in\KKK, c\neq0.
$$
Since $\xi\in \bigcap_{i\in I_1} J_i$ and $\xi'\in\bigcap_{i\in I_2} J_i$, we see that
$$
\gamma\beta=c^{-1}\xi\xi'\in(\bigcap_{i\in I_1}J_i)(\bigcap_{i\in I_2} J_i)=\gamma\trace(\aaaa).
$$
This is what we wanted to show and we see that
$
\trace(\aaaa)=\bigcap_{i\in I'\cup I''}J(x;\sigma_i)$.
Since $J(x;\sigma_i)$ is a prime ideal for any $i$, we see that $\trace(\aaaa)$ is a radical ideal.

Next consider the case where $\kappa-\kappa'\geq 2$.
Since $\aaaa=\bigcap_{i=0}^t J_i^{\kappa_i}$, we see that $\aaaa$ is generated by
$\{\xi_1\cdots\xi_\kappa : \xi_1, \ldots, \xi_{\kappa_i}\in\Omega_i$ for $0\leq i\leq t\}$.
On the other hand, since $\gamma^\kappa \aaaa^{(-1)}=\bigcap_{i=0}^t J_i^{\kappa-\kappa_i}$,
$\gamma^\kappa\aaaa^{(-1)}$ is generated by 
$\{\xi_1\cdots\xi_{\kappa-\kappa'} : \xi_1, \ldots, \xi_{\kappa-\kappa_i}\in\Omega_i$ for $0\leq i\leq t\}$.
Therefore $\gamma^\kappa\trace(\aaaa)=\aaaa(\gamma^\kappa \aaaa^{(-1)})
=(\bigcap_{i=0}^t J_i^{\kappa_i})(\bigcap_{i=0}^t J_i^{\kappa-\kappa_i})$ is generated by homogeneous elements
of degree $2\kappa-\kappa'$.
Thus $\trace(\aaaa)$ is generated by homogeneous elements of degree $\kappa-\kappa'$.
In particular, $\gamma\not\in\trace(\aaaa)$, since $\kappa-\kappa'\geq 2$.

On the other hand, by the above description, we see that
$\gamma^\kappa\in\bigcap_{i=0}^t J_i^{\kappa_i}$ and $\gamma^{\kappa-\kappa'}\in\bigcap_{i=0}^t J_i^{\kappa-\kappa'}$.
Thus, we see that 
$\gamma^{2\kappa-\kappa'}\in(\bigcap_{i=0}^t J_i^{\kappa_i})(\bigcap_{i=0}^t J_i^{\kappa-\kappa_i})
=\gamma^\kappa\trace(\aaaa)$,
and therefore
$\gamma^{\kappa-\kappa'}\in\trace(\aaaa)$.
Thus, we see that $\trace(\aaaa)$ is not a radical ideal.
\end{proof}

\begin{remark}
\rm
Ficarra et al. \cite[Theorem 1.1]{fhst} showed that if $\KKK$ is a field,
$m$ and $n$ are integers with $2\leq m\leq n$,
$X=(X_{ij})$ is an $m\times n$ matrix of indeterminates and
$t$ is an integer with $2\leq t\leq m$ then
$$
\trace_R(\omega_R)=I_{t-1}(X)^{n-m}R,
$$
where $R=\KKK[X_{ij} : 1\leq i\leq m, 1\leq j\leq n]/I_t(X)$.
Since $I_{t-1}(X)R$ is a prime ideal of $R$, we see that $R$ is CTR if and only if
$n-m\leq 1$.
While by \cite[Corollary 8.9]{bv} $R$ is \gor\ if and only if $n-m=0$.
\end{remark}


\section{CTR property of the Ehrhart rings of the stable set polytopes of  cycle graphs}

In this section, we establish a criterion of the CTR propety of the
Ehrhart ring of the stable set polytope of a cycle graph.
For basic terminology and facts of graph theory, we consult \cite{die}.

Let $G=(V,E)$ be a graph.
A stable set $S$ of $G$ is a subset of $V$ with no pair of elements in $S$ are adjacent.
$\emptyset$ and $\{v\}$ for any $v\in V$ are trivially stable.
We define the stable set polytope, denoted by $\stab(G)$ of $G$ by
$$
\stab(G)\define\conv\{\chi_S\in\RRR^V : S \mbox{ is a stable set of $G$}\}.
$$

Next we state the following.

\begin{definition}
\mylabel{def:xi+}
\rm
Let $X$ be a finite set and $\xi\in\RRR^X$.
For $B\subset X$, we set $\xip(B)\define\sum_{b\in B}\xi(b)$.
\end{definition}

We call a graph $G$ a cycle graph if $G$ consists of one cycle only, i.e.
$V=\{v_0, v_1, \ldots, v_{n-1}\}$, $E=\{\{v_i, v_j\} : i-j\equiv 1 \pmod n\}$
for some $n$ with $n\geq 3$.
A graph $G=(V,E)$ is called a t-perfect graph if
$$
\stab(G)=\left\{f\in\RRR^V : \vcenter{\hsize=.5\textwidth\relax\noindent
$0\leq f(v)\leq 1$ for any $v\in V$,
$f^+(e)\leq 1$ for any $e\in E$ and
$f^+(C)\leq\frac{|C|-1}{2}$ for any odd cycle $C$ without chord%
}
\right\}.
$$
It is known that a cycle graph is t-perfect. See \cite{mah}.

Let $G=(V,E)$ be a t-perfect graph.
For $n\in\ZZZ$, set
$$
\tUUUUUn\define\left\{\mu\in\ZZZ^{V^-} :
\vcenter{\hsize=.5\textwidth\relax\noindent
$\mu(v)\geq n$ for any $v\in V$,
$\mu^+(K)+n\leq\mu(-\infty)$ for any maximal clique $K$ in $G$ and
$\mu^+(C)+n\leq\frac{|C|-1}{2}\mu(-\infty)$ for any odd cycle $C$ without chord and length at least 5}
\right\}.
$$
Since $G$ is t-perfect, there are no cliques with size greater than 3, we see by \cite[Remark 3.10]{mhperfect} that
$$
\omega_{\eksg}^{(n)}=\bigoplus_{\mu\in\tUUUUUn}\KKK T^\mu
$$
for any $n\in \ZZZ$.

Let $G=(V,E)$ be a cycle graph.
If the length of the cycle is even, then $G$ is a bipartite graph and therefore is a perfect graph whose maximal
cliques have size 2.
Thus, by \cite[Theorem 2.1 (b)]{oh}, we see that $\eksg$ is a \gor\ ring.
Further, if the length $n$ of the cycle is odd, then by \cite[Theorem 1]{ht}, $\eksg$ is \gor\ if and only if
$n\leq 5$.

Therefore, we assume in the following of this section that $G=(V,E)$ is a cycle graph of odd length $n$
with $n\geq 7$.
Set $n=2\ell+1$, $V=\{v_0, v_1, \ldots, v_{2\ell}\}$ and we consider indices modulo $2\ell+1$,
$E=\{\{v_i,v_j\}: i-j\equiv 1 \pmod{2\ell+1}\}$, $e_i=\{v_i, v_{i+1}\}$ for $0\leq i\leq 2\ell$.
Further, we denote by $\omega$ the canonical ideal of $\eksg$.

Set 
$$
\pppp_i\define\bigoplus_{{\mu\in\tUUUUU^{(0)}}, \atop\mu(v_i)>0\  \mathrm{ or }\  \mu^+(V)<\ell\mu(-\infty)}
\KKK T^\mu
$$
for $0\leq i\leq 2\ell$.
Then $\pppp_i$ is a prime ideal of $\eksg$ and
$$
\sqrt{\trace(\omega)}=\bigcap_{i=0}^{2\ell}\pppp_i
$$
by \cite[Theorem 3.1]{mcycle}.

Now we state the following criterion of CTR property of $\eksg$.

\begin{thm}
\mylabel{thm:cycle graph}
Let $G=(V,E)$ be a cycle graph of odd length $n$.
Then $\eksg$ is a CTR ring if and only if $n\leq 7$.
\end{thm}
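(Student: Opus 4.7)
For odd $n\le 5$, $\eksg$ is Gorenstein by \cite[Theorem 1]{ht}, hence CTR, so we may assume $n=2\ell+1\ge 7$. The central reduction is that $\omega$ has the monomial basis $\{T^\mu:\mu\in\tUUUUU^{(1)}\}$ and likewise $\omega^{(-1)}$, so $\trace(\omega)=\omega\cdot\omega^{(-1)}$ is a monomial ideal and $T^\mu\in\trace(\omega)$ iff $\mu=\xi+\eta$ for some $\xi\in\tUUUUU^{(1)}$ and $\eta\in\tUUUUU^{(-1)}$. Since each $\pppp_i$ is also monomial, \cite[Theorem 3.1]{mcycle} gives $T^\mu\in\sqrt{\trace(\omega)}$ iff either (A) $\mu(v_i)\ge 1$ for every $i$, or (B) $\mu^+(V)<\ell\mu(-\infty)$. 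The CTR question thereby reduces to asking whether every monomial $T^\mu$ satisfying (A) or (B) admits such a decomposition.

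\textbf{If direction ($n=7$, so $\ell=3$).} I will exhibit uniform decompositions in both cases. In Case (A), set $\xi(v_i)\equiv 2$ with $\xi(-\infty)=5$; in Case (B), set $\xi(v_i)\equiv 1$ with $\xi(-\infty)=3$; in each case let $\eta=\mu-\xi$. Direct verification shows $\xi\in\tUUUUU^{(1)}$ (the defining inequalities become equalities in (A)). The pointwise constraint $\eta(v_i)\ge -1$ follows from (A) or trivially from $\mu(v_i)\ge 0$; the edge constraint on $\eta$ reduces to the edge constraint on $\mu$; and the cycle constraint on $\eta$ reduces respectively to $\mu^+(V)\le 3\mu(-\infty)$ (part of $\mu\in\tUUUUU^{(0)}$) in Case (A) and to $\mu^+(V)\le 3\mu(-\infty)-1$ (exactly the hypothesis) in Case (B).

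\textbf{Only-if direction ($n\ge 9$, so $\ell\ge 4$).} I will exhibit the explicit counterexample $\mu(v_0)=\mu(v_1)=0$, $\mu(v_i)=1$ for $2\le i\le 2\ell$, and $\mu(-\infty)=2$. Then $\mu\in\tUUUUU^{(0)}$ and $\mu^+(V)=2\ell-1<2\ell=\ell\mu(-\infty)$, so $T^\mu\in\sqrt{\trace(\omega)}$ by (B). Assuming $\mu=\xi+\eta$, the pointwise constraints force $\xi(v_0)=\xi(v_1)=1$, $\eta(v_0)=\eta(v_1)=-1$, and $\xi(v_i)\in\{1,2\}$ for $i\ge 2$. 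Setting $s=\#\{i\ge 2:\xi(v_i)=2\}$ gives $\xi^+(V)=2\ell+1+s$ and $\eta^+(V)=-2-s$, and the cycle inequalities together with $\xi(-\infty)+\eta(-\infty)=2$ force
\[
\lceil(2\ell+2+s)/\ell\rceil+\lceil-(s+3)/\ell\rceil\le 2.
\]
A short residue analysis (writing $s+2=q\ell+r$ with $0\le r<\ell$) shows this holds only when $s\equiv\ell-2$ or $\ell-3\pmod{\ell}$, i.e.\ (using $0\le s\le 2\ell-1$) $s\in\{\ell-3,\ell-2,2\ell-3,2\ell-2\}$. For $s\in\{\ell-3,\ell-2\}$, the forced $\xi(-\infty)=3$ together with the edge constraint $\xi^+(K)\le 2$ forces $\xi\equiv 1$, contradicting $s\ge 1$ (since $\ell\ge 4$). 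For $s\in\{2\ell-3,2\ell-2\}$, the forced $\xi(-\infty)=4$ and $\xi^+(K)\le 3$ force the positions with $\xi(v_i)=2$ to be an independent set in the path $v_2,v_3,\ldots,v_{2\ell}$, so $s\le\ell<2\ell-3$. Every case fails, so no decomposition exists.

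\textbf{Main obstacle.} The technical core is the only-if direction: the cycle constraints behave modularly in $\ell$, the edge constraints are combinatorial, and the identity $\xi(-\infty)+\eta(-\infty)=\mu(-\infty)$ ties them together, so one must rule out each admissible $s$ individually. The if direction, by contrast, is a one-line verification once the two ansätze are written down.
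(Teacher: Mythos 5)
Your proof is correct, and it shares the paper's overall skeleton: identify $\trace(\omega)$ with $\omega\cdot\omega^{(-1)}$ so that $T^\mu\in\trace(\omega)$ means exactly $\mu=\xi+\eta$ with $\xi\in\tUUUUU^{(1)}$, $\eta\in\tUUUUU^{(-1)}$, use $\sqrt{\trace(\omega)}=\bigcap_i\pppp_i$ from \cite[Theorem 3.1]{mcycle}, settle $\ell=3$ by producing decompositions and $\ell\ge4$ by exhibiting one monomial in the radical with no decomposition. Both halves, however, are executed differently. For $n=7$ the paper simply invokes the proofs of \cite[Lemmas 3.4 and 3.5]{mcycle}, whereas you give two explicit uniform decompositions ($\xi\equiv2$, $\xi(-\infty)=5$ in case (A); $\xi\equiv1$, $\xi(-\infty)=3$ in case (B)); these do lie in $\tUUUUU^{(1)}$, and the conditions for $\eta=\mu-\xi\in\tUUUUU^{(-1)}$ reduce exactly to the inequalities you list, so your ``if'' direction is self-contained where the paper's is a citation. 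For $\ell\ge4$ the paper chooses the degree-one witness $\mu(-\infty)=1$, $\mu=1$ precisely on $v_2,v_4,\ldots,v_{2\ell-2}$: since $\eta(-\infty)+\zeta(-\infty)=1$, the two ceiling bounds immediately force $\eta(-\infty)=3$, $\zeta(-\infty)=-2$, the edge constraints then force $\eta\equiv1$ on $V$, and the cycle inequality for $\zeta$ fails --- a few lines. Your witness has $\mu(-\infty)=2$ with two adjacent zeros, so the putative decomposition is not rigid and you must introduce the parameter $s$, do the residue analysis (the sum of ceilings is $2$ exactly when $s+2\equiv0$ or $\ell-1\pmod\ell$, and $3$ otherwise, giving $s\in\{\ell-3,\ell-2,2\ell-3,2\ell-2\}$), and then eliminate these cases via the forced values $\xi(-\infty)=3$ resp.\ $4$, the all-ones conclusion $s=0$, and the independent-set bound $s\le\ell<2\ell-3$; I checked each of these steps and they close the argument for $\ell\ge4$. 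In short, the paper's alternating pattern buys a much shorter contradiction because degree one rigidifies the decomposition at once, while your choice costs a case analysis but is elementary and, together with your explicit case-(A)/(B) decompositions, makes the whole theorem verifiable without consulting the earlier lemmas of \cite{mcycle}.
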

\begin{proof}
Set $n=2\ell+1$ as above.
First we prove the ``if'' part.
Suppose $\ell=3$, $\mu\in\tUUUUU^{(0)}$ and $T^\mu\in\bigcap_{i=0}^{2\ell}\pppp_i$.
Then by the proof of \cite[Lemmas 3.4 and 3.5]{mcycle}, we see that $T^\mu$ in $\trace(\omega)$.
Therefore, we see that $\bigcap_{i=0}^{2\ell}\pppp_i\subset\trace(\omega)$.
Since $\sqrt{\trace(\omega)}=\bigcap_{i=0}^{2\ell}\pppp_i$, we see that 
$$
\sqrt{\trace(\omega)}=\trace(\omega)
$$
and $\eksg$ is a CTR ring.

Next we prove the contraposition of ``only if'' part.
Suppose that $\ell\geq 4$ and define $\mu\in\ZZZ^{V^-}$ by
$$
\mu(x)=
\left\{
\begin{array}{ll}
1&\mbox{ if $x\in\{v_2,v_4, \ldots, v_{2\ell-2}, -\infty\}$},\\
0&\mbox{ otherwise.}
\end{array}
\right.
$$
Then, $\mu\in\tUUUUU^{(0)}$ and $\mu^+(V)=\ell-1<\ell=\ell\mu(-\infty)$.
Therefore, we see that $T^\mu\in\bigcap_{i=0}^{2\ell}\pppp_i=\sqrt{\trace(\omega)}$.
We assume that $\mu$ can be expressed as a sum of elements in $\tUUUUU^{(1)}$ and $\tUUUUU^{(-1)}$
and deduce a contradiction.

Suppose $\mu=\eta+\zeta$, $\eta\in\tUUUUU^{(1)}$ and $\zeta\in\tUUUUU^{(-1)}$.
Since $\eta(x)\geq 1$ for any $x\in V$ and $\eta^+(V)+1\leq \ell\eta(-\infty)$,
we see that
$$
\eta(-\infty)\geq\left\lceil\frac{2\ell+2}{\ell}\right\rceil=3.
$$
Similarly, since $\zeta(x)\geq -1$ for any $x\in V$ and $\zeta^+(V)-1\leq\ell\zeta(-\infty)$,
we see that
$$
\zeta(-\infty)\geq\left\lceil\frac{-2\ell-2}{\ell}\right\rceil=-2.
$$
On the other hand, since $\eta(-\infty)+\zeta(-\infty)=\mu(-\infty)=1$, we see that
$$
\eta(-\infty)=3\quad\mbox{ and }\quad\zeta(-\infty)=-2.
$$
Moreover, since $\eta(v_i)+\eta(v_{i+1})+1=\eta^+(e_i)+1\leq\eta(-\infty)=3$
and $\eta(v_j)\geq 1$ for any $i$ and $j$, we see that $\eta(v_j)=1$ for any $j$.
Thus,
$$
\zeta(x)=
\left\{
\begin{array}{ll}
0&\mbox{ if $x\in\{v_2, v_4, \cdots, v_{2\ell-2}\}$},\\
-2&\mbox{ if $x=-\infty$},\\
-1&\mbox{ otherwise.}
\end{array}
\right.
$$
Therefore, $\zeta^+(V)=-\ell-2$ and we see that
$$
\zeta^+(V)-1=-\ell-3>-2\ell=\ell\zeta(-\infty),
$$
since $\ell\geq 4$.
This contradicts to the assumption that $\zeta\in\tUUUUU^{(-1)}$.
Therefore, we see that $T^\mu\not\in\trace(\omega)$ and $\trace(\omega)$ is not a radical ideal.
\end{proof}


\section{A necessary condition for the Ehrhart ring of the stable set polytope of a 
perfect graph to be CTR}

In this section, we state a necessary condition of an Ehrhart ring of the stable set polytope of a
perfect graph to be CTR, 
which shows that CTR property is close to \gor\ property.
Let $G=(V,E)$ be a perfect graph and set
$k=\max\{|K| : K$ is a maximal clique of $G\}$ and
$k'=\min\{|K| : K$ is a maximal clique of $G\}$.

By Chv\'atal \cite[Theorem 3.1]{chv}, we see that
$$
\stab(G)=\left\{f\in\RRR^{V}: \vcenter{\hsize=.5\textwidth\relax\noindent
$f(x)\geq 0$ for any $x\in V$ and $f^+(K)\leq 1$ for any maximal clique $K$ of $G$}
\right\}.
$$
Further, by Ohsugi and Hibi \cite[Theorem 2.1 (b)]{oh}, $\eksg$ is \gor\ if and only if $k-k'=0$.
We recall notation and basic facts from \cite[Remark 3.8]{mhperfect}.
For $n\in\ZZZ$, we set
$$
\qUUUUUn\define\left\{
\mu\in\ZZZ^{V^-}:
\vcenter{\hsize=.5\textwidth\relax\noindent
$\mu(x)\geq n$ for any $x\in V$ and
$\mu^+(K)+n\leq \mu(-\infty)$ for any maximal clique $K$ of $G$}
\right\}.
$$
Then 
$$
\omega^{(n)}=\bigoplus_{\mu\in\qUUUUUn}\KKK T^\mu
$$
for $n\in\ZZZ$, where $\omega$ is the canonical ideal of $\eksg$.

Here, we state a very easily proved but very useful fact.

\begin{lemma}
\mylabel{lem:high school}
If $x\in V$, $\eta\in\qUUUUU^{(1)}$, $\zeta\in\qUUUUU^{(-1)}$ and $(\eta+\zeta)(x)=0$,
then $\eta(x)=1$ and $\zeta(x)=-1$.
\end{lemma}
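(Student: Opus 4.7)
The plan is to read off the two inequalities that the definition of $\qUUUUUn$ imposes on the values at $x\in V$: from $\eta\in\qUUUUU^{(1)}$ we get $\eta(x)\geq 1$, and from $\zeta\in\qUUUUU^{(-1)}$ we get $\zeta(x)\geq -1$. These are the only facts that will be used; no information about $\eta(-\infty)$, $\zeta(-\infty)$, or the clique inequalities is needed.

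Next I would combine the two pointwise bounds with the hypothesis $(\eta+\zeta)(x)=0$. Rewriting this as $\eta(x)=-\zeta(x)$ and using $\zeta(x)\geq -1$ gives $\eta(x)\leq 1$, while $\eta(x)\geq 1$ is already at hand; hence $\eta(x)=1$, and then $\zeta(x)=-\eta(x)=-1$ follows immediately.

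There is essentially no obstacle: the entire argument is a one-line squeezing of integers between their lower bounds, and the lemma's own description (``very easily proved but very useful'') reflects exactly this. The only care needed is to cite the correct component of the definition of $\qUUUUUn$ (the pointwise lower bound $\mu(x)\geq n$ for $x\in V$, not the clique inequality), so that the reader sees precisely which piece of the definition is being invoked.
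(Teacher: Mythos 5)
Your proof is correct and is exactly the intended argument: the paper states the lemma without proof (``very easily proved''), and the one-line squeeze using the pointwise bounds $\eta(x)\geq 1$, $\zeta(x)\geq -1$ from the definition of $q\,\mathcal{U}^{(n)}$ together with $(\eta+\zeta)(x)=0$ is precisely what is meant.
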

Now we state the following necessary condition for $\eksg$ to be a CTR ring.

\begin{prop}
\mylabel{prop:perfect graph}
Let $G=(V,E)$ be a perfect graph.
If $\eksg$ is CTR, then $k-k'\leq 1$.
\end{prop}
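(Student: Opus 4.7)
The plan is to prove the contrapositive: assuming $k-k'\geq 2$, I will produce a monomial $T^\mu$ in $\sqrt{\trace(\omega)}\setminus\trace(\omega)$. Because $\eksg$ is a toric ring and both $\omega$ and $\omega^{(-1)}$ are monomial divisorial ideals, $\trace(\omega)=\omega\,\omega^{(-1)}$ is a monomial ideal whose monomials are exactly the $T^{\eta+\zeta}$ with $\eta\in\qUUUUU^{(1)}$ and $\zeta\in\qUUUUU^{(-1)}$; hence radicality can be tested monomial by monomial, and exhibiting such a $\mu$ will, via Proposition \ref{prop:graded}, contradict the assumption that $\eksg$ is CTR.

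The candidate is $\mu\in\ZZZ^{V^-}$ defined by $\mu(v)=0$ for all $v\in V$ and $\mu(-\infty)=1$; it lies in $\qUUUUU^{(0)}$ since $\mu^+(K)=0\leq 1=\mu(-\infty)$ for every maximal clique $K$, so $T^\mu\in\eksg$. To see $T^\mu\in\sqrt{\trace(\omega)}$, set $N=k-k'$ and define $\eta_0\in\ZZZ^{V^-}$ by $\eta_0(v)=1$ on $V$ and $\eta_0(-\infty)=k+1$, and $\zeta_0\in\ZZZ^{V^-}$ by $\zeta_0(v)=-1$ on $V$ and $\zeta_0(-\infty)=-k'-1$. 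Using $|K|\leq k$ for each maximal clique $K$, one checks $\eta_0\in\qUUUUU^{(1)}$; using $|K|\geq k'$ for each maximal clique $K$, one checks $\zeta_0\in\qUUUUU^{(-1)}$. Since $\eta_0+\zeta_0=N\mu$, the monomial $(T^\mu)^N=T^{N\mu}$ lies in $\trace(\omega)$.

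The substantive step is ruling out $T^\mu\in\trace(\omega)$. Suppose for contradiction that $\mu=\eta+\zeta$ with $\eta\in\qUUUUU^{(1)}$ and $\zeta\in\qUUUUU^{(-1)}$. Since $(\eta+\zeta)(v)=0$ for each $v\in V$, Lemma \ref{lem:high school} forces $\eta(v)=1$ and $\zeta(v)=-1$ throughout $V$. Applying the clique inequality for $\eta$ to a maximum maximal clique $K_{\max}$ with $|K_{\max}|=k$ yields $\eta(-\infty)\geq k+1$, and applying the clique inequality for $\zeta$ to a minimum maximal clique $K_{\min}$ with $|K_{\min}|=k'$ yields $\zeta(-\infty)\geq -k'-1$. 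Adding these gives $\mu(-\infty)=\eta(-\infty)+\zeta(-\infty)\geq k-k'\geq 2$, contradicting $\mu(-\infty)=1$.

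The only delicate point is locating the extremal monomial: the coordinate constraint on $V$ must be minimal enough that Lemma \ref{lem:high school} pins down $\eta$ and $\zeta$ on all of $V$, while the value on the $-\infty$-coordinate must be too small to accommodate the combined clique-size gap $k-k'$. Once the candidate $\mu$ is chosen, the argument reduces to a single scalar inequality on the $-\infty$-coordinate, with the extremal cliques $K_{\max}$ and $K_{\min}$ providing precisely the obstruction required.
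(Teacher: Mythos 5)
Your proposal is correct and follows essentially the same route as the paper: the same test monomial $T^\mu$ with $\mu\equiv 0$ on $V$ and $\mu(-\infty)=1$, the same pair $\eta,\zeta$ giving $(k-k')\mu=\eta+\zeta$ to place $(T^\mu)^{k-k'}$ in $\trace(\omega)$, and the same use of Lemma \ref{lem:high school} together with the clique inequalities at a largest and a smallest maximal clique to exclude $T^\mu$ itself from $\trace(\omega)$. Your explicit remark that $\trace(\omega)=\omega\,\omega^{(-1)}$ is a monomial ideal whose monomials are exactly the $T^{\eta+\zeta}$ just makes precise what the paper uses implicitly, so there is no substantive difference.
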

\begin{proof}
We prove the contraposition of the proposition, so we assume that $k-k'\geq 2$.
Let $\mu$ be an element of $\ZZZ^{V^-}$ defined by
$$
\mu(x)=\left\{
\begin{array}{ll}
0&\mbox{ if $x\in V$},\\
1&\mbox{ if $x=-\infty$}.
\end{array}
\right.
$$
Then $\mu\in\qUUUUU^{(0)}$ and therefore $T^\mu\in\eksg$.
Further, if we define $\eta$, $\zeta\in\ZZZ^{V^-}$ by
$$
\eta(x)=\left\{
\begin{array}{ll}
1&\mbox{ if $x\in V$},\\
k+1&\mbox{ if $x=-\infty$},
\end{array}
\right.
\quad\mbox{ and }\quad
\zeta(x)=\left\{
\begin{array}{ll}
-1&\mbox{ if $x\in V$},\\
-k'-1&\mbox{ if $x=-\infty$},
\end{array}
\right.
$$
then $\eta\in\qUUUUU^{(1)}$, $\zeta\in\qUUUUU^{(-1)}$ and $(k-k')\mu=\eta+\zeta$.
Therefore,
$$
(T^\mu)^{k-k'}\in\trace(\omega).
$$

On the other hand, if there are $\eta'\in\qUUUUU^{(1)}$ and $\zeta'\in\qUUUUU^{(-1)}$ with
$\mu=\eta'+\zeta'$, then $\eta'(x)=1$ and $\zeta'(x)=-1$ for any $x\in V$ by Lemma \ref{lem:high school}.
Therefore, $\eta'(-\infty)\geq k+1$ and $\zeta'(-\infty)\geq -k'-1$.
Thus,
$$
1=\mu(-\infty)=\eta'(-\infty)+\zeta'(-\infty)\geq k-k'\geq 2.
$$
This is a contradiction.
Therefore, $T^\mu\not\in\trace(\omega)$ and we see that $\trace(\omega)$ is not a radical ideal.
\end{proof}

As the following example shows, the condition in the above proposition is not sufficient.

\begin{example}
\rm
Let $G=(V,E)$ be the following graph.
\begin{center}
\begin{tikzpicture}
\coordinate (Y1) at (0,0);
\coordinate (X1) at (1,0);
\coordinate (X2) at (2,0);
\coordinate (X3) at (3,0);
\coordinate (Y4) at (4,0);
\coordinate (Y2) at (1.5,.7);
\coordinate (Y3) at (3.5,.7);

\draw (Y1)--(X1)--(X2)--(X3)--(Y4);
\draw (X1)--(Y2)--(X2);
\draw (X3)--(Y3)--(Y4);

\draw[fill] (X1) circle [radius=0.1];
\draw[fill] (X2) circle [radius=0.1];
\draw[fill] (X3) circle [radius=0.1];
\draw[fill] (Y1) circle [radius=0.1];
\draw[fill] (Y2) circle [radius=0.1];
\draw[fill] (Y3) circle [radius=0.1];
\draw[fill] (Y4) circle [radius=0.1];

\node at (0,-.3) {$y_1$};
\node at (1,-.3) {$x_1$};
\node at (2,-.3) {$x_2$};
\node at (3,-.3) {$x_3$};
\node at (4,-.3) {$y_4$};
\node at (1.5,1) {$y_2$};
\node at (3.5,1) {$y_3$};

\end{tikzpicture}
\end{center}
This is a comparability graph of a poset $P$ whose Hasse diagram is

\begin{center}
\begin{tikzpicture}
\coordinate (y1) at (0,2);
\coordinate (x1) at (1,0);
\coordinate (y2) at (1,1);
\coordinate (x2) at (1,2);
\coordinate (x3) at (2,0);
\coordinate (y3) at (2,1);
\coordinate (y4) at (2,2);

\draw (y1)--(x1)--(x2)--(x3)--(y4);

\draw[fill] (y1) circle [radius=.1];
\draw[fill] (y2) circle [radius=.1];
\draw[fill] (y3) circle [radius=.1];
\draw[fill] (y4) circle [radius=.1];
\draw[fill] (x1) circle [radius=.1];
\draw[fill] (x2) circle [radius=.1];
\draw[fill] (x3) circle [radius=.1];

\node at (0,2.3) {$y_1$};
\node at (1,-.3) {$x_1$};
\node at (.7,1) {$y_2$};
\node at (1,2.3) {$x_2$};
\node at (2,-.3) {$x_3$};
\node at (2.3,1) {$y_3$};
\node at (2,2.3) {$y_4$};

\end{tikzpicture}
\end{center}
i.e.
$G=(V,E)$, $V=P$, $E=\{\{z,w\}: z,w\in P, z<w\}$.
In particular, $G$ is perfect.
Further, 
$\max\{|K|: K$ is a maximal clique in $G\}=3$ and
$\min\{|K|: K$ is a maximal clique in $G\}=2$.

Define $\mu\in\ZZZ^{V^-}$ by
$$
\mu(z)=\left\{
\begin{array}{ll}
1&\mbox{ if $z\in\{x_1, x_2, x_3\}$},\\
0&\mbox{ if $z\in\{y_1,y_2,y_3,y_4\}$},\\
2&\mbox{ if $z=-\infty$}.
\end{array}
\right.
$$
Then $\mu\in\qUUUUU^{(0)}$.

We first show that $T^\mu\not\in\trace(\omega)$.
Assume the contrary.
Then there are $\eta\in\qUUUUU^{(1)}$ and $\zeta\in\qUUUUU^{(-1)}$ with $\mu=\eta+\zeta$.
Then $\eta(y_i)=1$ and $\zeta(y_i)=-1$ for $1\leq i\leq 4$ by Lemma \ref{lem:high school}.
Further,
\begin{eqnarray*}
\eta(-\infty)+\zeta(-\infty)&=&\mu(-\infty)=2,\\
\eta(x_i)+\zeta(x_i)&=&\mu(x_i)=1\quad\mbox{ for $1\leq i\leq 3$},\\
\zeta(x_1)+\zeta(y_1)-1&\leq&\zeta(-\infty),\\
\eta(x_1)+\eta(x_2)+\eta(y_2)+1&\leq&\eta(-\infty),\\
\zeta(x_2)+\zeta(x_3)-1&\leq&\zeta(-\infty)\\
\noalign {and} \\
\eta(x_3)+\eta(y_3)+\eta(y_4)+1&\leq&\eta(-\infty).
\end{eqnarray*}
Therefore 
\begin{eqnarray*}
4&=&2\mu(-\infty)\\
&=&2\eta(-\infty)+2\zeta(-\infty)\\
&\geq&(\zeta(x_1)-2)+(\eta(x_1)+\eta(x_2)+2)+(\zeta(x_2)+\zeta(x_3)-1)+(\eta(x_3)+3)\\
&=&\eta(x_1)+\eta(x_2)+\eta(x_3)+\zeta(x_1)+\zeta(x_2)+\zeta(x_3)+2\\
&=&\mu(x_1)+\mu(x_2)+\mu(x_3)+2\\
&=&5.
\end{eqnarray*}
This is a contradiction.
Thus, we see that $T^\mu\not\in\trace(\omega)$.

Next, we show that $T^\mu\in\sqrt{\trace(\omega)}$.
Since $G$ is a comparability graph of $P$, $\stab(G)=\msCCC(P)$ by \cite[Theorem 3.1]{chv},
where $\msCCC(P)$ is the chain polytope of $P$.
See \cite{sta} for the definition of the chain polytope.
Therefore, by \cite[Theorem 3.7]{mp}, we see that $T^\mu\in\sqrt{\trace(\omega)}$.
In fact, by using the idea of the proof of \cite[Theorem 3.7]{mp}, we can construct $\eta\in\qUUUUU^{(1)}$ and
$\zeta\in\qUUUUU^{(-1)}$ as follows.
\begin{eqnarray*}
\eta(z)&=&
\left\{
\begin{array}{ll}
1&\mbox{ if $z\in\{y_1,y_2,y_3,y_4\}$},\\
2&\mbox{ if $z\in\{x_1,x_2\}$},\\
3&\mbox{ if $z=x_3$},\\
6&\mbox{ if $z=-\infty$},
\end{array}
\right.\\
\zeta(z)&=&
\left\{
\begin{array}{ll}
0&\mbox{ if $z\in\{x_1,x_2\}$},\\
-1&\mbox{ if $z\in\{x_3,y_1,y_2,y_3,y_4\}$},\\
-2&\mbox{ if $z=-\infty$}
\end{array}
\right.
\end{eqnarray*}
Then $\eta+\zeta=2\mu$ and therefore $(T^\mu)^2\in\trace(\omega)$.
\end{example}

We end with the following.

\begin{prob}
\rm
Give a criterion of the Ehrhart ring of the chain polytope of a poset to be CTR.
\end{prob}





%

\end{document}